\newtheorem{thm}{Theorem}[section]
\newtheorem*{thm*}{Theorem}
\newtheorem{lem}[thm]{Lemma}
\newtheorem*{lem*}{Lemma}
\newtheorem{prop}[thm]{Proposition}
\theoremstyle{definition}
\newtheorem{assump}[thm]{Assumption}
\newtheorem{case}{Case}\renewcommand{\thecase}{}
\newtheorem*{case*}{Case}
\newtheorem{defn}[thm]{Definition}
\newtheorem*{defn*}{Definition}
\newtheorem*{exmp*}{Example}
\newtheorem{rmk}[thm]{Remark}
\newtheorem*{rmk*}{Remark}
\newtheorem{step}{Step}\renewcommand{\thestep}{}
\theoremstyle{remark}
\def\alphenumi{
  \def\theenumi{\alph{enumi}}
  \def\p@enumi{\theenumi}
  \def\labelenumi{(\@alph\c@enumi)}}
\def\thecase{\@arabic\c@case}
\def\thestep{\@arabic\c@step}
\def\hhmm{\number\hh:\ifnum\mm<10{}0\fi\number\mm}
\let\oldmarginpar\marginpar
\renewcommand\marginpar[1]{\-\oldmarginpar[\raggedleft\footnotesize #1]%
{\raggedright\footnotesize #1}}
\newcommand\dotprod{\hbox{$\cdot$}}
\newcommand\barU{{\overline{U}}}
\newcommand\EE{\mathbb{E}}
\newcommand\NN{\mathbb{N}}
\newcommand\PP{\mathbb{P}}
\newcommand\QQ{\mathbb{Q}}
\newcommand\RR{\mathbb{R}}
\newcommand\cF{{\mathcal{F}}}
\newcommand\cG{{\mathcal{G}}}
\newcommand\cT{{\mathcal{T}}}
\newcommand\eps{\varepsilon}
\numberwithin{equation}{section}
\theoremstyle{default}
\newtheorem{locclaim}{Claim}
\begin{document}

\title[Obstacle problems for nonlocal operators]{Obstacle problems for nonlocal operators}

\author[D. Danielli]{Donatella Danielli}
\address[DD]{Department of Mathematics, Purdue University, West Lafayette, IN 47907}
\email{danielli@math.purdue.edu}

\author[A. Petrosyan]{Arshak Petrosyan}
\address[AP]{Department of Mathematics, Purdue University, West Lafayette, IN 47907}
\email{arshak@math.purdue.edu}

\author[C. A. Pop]{Camelia A. Pop}
\address[CP]{School of Mathematics, University of Minnesota, Vincent Hall, 206 Church St. SE, Minneapolis, MN 55455}
\email{capop@umn.edu}

\begin{abstract}
We prove existence, uniqueness, and regularity of viscosity solutions to the stationary and evolution obstacle problems defined by a class of nonlocal operators that are not stable-like and may have supercritical drift. We give sufficient conditions on the coefficients of the operator to obtain H\"older and Lipschitz continuous solutions. The class of nonlocal operators that we consider include non-Gaussian asset price models widely used in mathematical finance, such as Variance Gamma Processes and Regular L\'evy Processes of Exponential type. In this context, the viscosity solutions that we analyze coincide with the prices of perpetual and finite expiry American options.
\end{abstract}

\date{\today{ }\hhmm}

\subjclass[2010]{Primary 35R35; Secondary 60G51, 91G80}
\keywords{Obstacle problem, nonlocal operators, L\'evy processes, American options,
  viscosity solutions, existence and uniqueness}
\maketitle


\section{Introduction}
\label{sec:Introduction}

The nonlocal operators that we consider are the infinitesimal generators of strong Markov processes that are solutions to stochastic equations of the form:
\begin{equation}
\label{eq:Process}
dX(t) = b(X(t-))\, dt + \int_{\RR^n\setminus\{O\}} F((X(t-),y) \, \widetilde N(dt, dy),\quad\forall\, t>0,
\end{equation}
where $O$ denotes the origin in $\RR^n$, $\widetilde N(dt, dy)$ is a compensated Poisson random measure with L\'evy measure $\nu(dy)$, and the coefficients $b(x)$ and $F(x,y)$ appearing in identity \eqref{eq:Process} are assumed to satisfy:

\begin{assump}[Coefficients]
\label{assump:Coeff}
There is a positive constant $K$ such that:
\begin{enumerate}
\item[1.]
For all $x, x_1, x_2\in\RR^n$, we have that
\begin{align}
\label{eq:F_Lipschitz_squared}
&\int_{\RR^n\setminus\{O\}} |F(x_1, y)-F(x_2, y)|^2 \, d\nu(y) \leq K |x_1-x_2|^2,\\
\label{eq:F_Levy_cond}
&\sup_{z\in B_{|y|}}|F(x,z)| \leq \rho(y),\quad\forall\, x,y\in\RR^n,
\quad\hbox{ and }\quad
\int_{\RR^n\setminus\{O\}} \left(|y|\vee\rho(y)\right)^2 \,\nu(dy) \leq K,
\end{align}
where $\rho:\RR^n\rightarrow[0,\infty)$ is a measurable function.
\item[2.]
The coefficient $b:\RR^n\rightarrow\RR^n$ is bounded and Lipschitz continuous, i.e., $b\in C^{0,1}(\RR^n)$.
\end{enumerate}
\end{assump}

Let $(\Omega,\cF,\{\cF_t\}_{t\geq0},\PP)$ be a filtered probability space that satisfies the usual hypotheses, \cite[\S~I.1]{Protter}, and that supports a Poisson random measure, $N(dt,dy)$, with L\'evy measure, $\nu(dy)$. 
Assumption~\ref{assump:Coeff} and \cite[Theorem~V.3.6]{Protter} ensure that, for any initial condition $X^x(0)=x\in\RR^n$, the stochastic equation \eqref{eq:Process} admits a unique strong solution $\{X^x(t)\}_{t\geq 0}$ with right-continuous and left-limits (RCLL) paths a.s.. Moreover, from \cite[Theorem~V.6.32]{Protter}\footnote{\cite[Theorem~V.6.32]{Protter} applies to the case when $b\equiv 0$ in the stochastic equation \eqref{eq:Process}, but it is not difficult to see that the proof immediately extends to the case of Lipschitz continuous drift coefficients $b(x)$, as we suppose in Assumption~\ref{assump:Coeff}.}
it follows that the process $\{X^x(t)\}_{t\geq 0}$ satisfies the strong Markov property. Thus, the Markov process $\{X^x(t)\}_{t\geq 0}$ is completely characterized by its infinitesimal generator, which is given by the nonlocal operator:
\begin{equation}
\label{eq:Operator}
L u(x) := b(x) \dotprod \nabla u(x) + \int_{\RR^n\setminus\{O\}}\left(u(x+F(x,y)) - u(x) - \nabla u(x)\dotprod F(x,y)\right)\,\nu(dy),
\end{equation}
for all $u\in C^{\infty}_c(\RR^n)$. Our goal is to study the existence, uniqueness, and regularity properties of viscosity solutions to the stationary and evolution obstacle problems associated to the operator $L$.

\subsection{Stationary obstacle problem}
\label{subsec:Stationary}
In this section we state our results related to the existence, uniqueness, and regularity of viscosity solutions to the stationary obstacle problem defined by the nonlocal operator $L$,
\begin{equation}
\label{eq:Stationary_obs_problem}
\min\{-Lv + c v - f, v - \varphi\} = 0,\quad\hbox{ on }\RR^n,
\end{equation}
where $c:\RR^n\rightarrow\RR$ is the zeroth order term,
$f:\RR^n\rightarrow\RR$ is the source function, and
$\varphi:\RR^n\rightarrow\RR$ is the obstacle function. For the
existence results in the stationary case we will also assume that the jump size $F(x,y)$ does not depend on the state variable, that is,
\begin{equation}
\label{eq:Jump_size}
F(x,y) = y,\quad\forall\, x, y\in\RR^n.
\end{equation}
Let $\cT$ denote the set of $\PP$-a.s.\ finite stopping times adapted to the filtration $\{\cF_t\}_{t\geq 0}$. Solutions to the obstacle problem \eqref{eq:Stationary_obs_problem} are constructed using the stochastic representation formula:
\begin{equation}
\label{eq:Value_function_stat}
v(x) := \sup\{v(x;\tau):\,\tau\in\cT\},
\end{equation}
where we denote
\begin{equation}
\label{eq:Value_function_stat_aux}
v(x;\tau) :=\EE \left[e^{-\int_0^{\tau} c(X^x(s))\, ds} \varphi(X^x(\tau))
+ \int_0^{\tau} e^{-\int_0^ t c(X^x(s))\, ds} f(X^x(t))\, dt\right],
\end{equation}
where $\{X^x(t)\}_{t\geq 0}$ is the unique solution to the stochastic equation \eqref{eq:Process} with initial condition $X^x(0)=x$,
for all $x\in\RR^n$. We first state

\begin{prop}[Regularity of the value function]
\label{prop:Regularity_stat}
Suppose that Assumption~\ref{assump:Coeff} and condition \eqref{eq:Jump_size} hold. Let $c,\varphi,f:\RR^n\rightarrow \RR$ be bounded Lipschitz continuous functions, and assume that there is a positive constant, $c_0$, with the property that
\begin{equation}
\label{eq:c_lower_bound}
c(x) \geq c_0>0,\quad\forall\, x\in\RR^n.
\end{equation}
Then the following hold:
\begin{enumerate}
\item[(i)] \emph{(H\"older continuity)}
There is a constant, $\alpha = \alpha([b]_{C^{0,1}(\RR^n)}, c_0) \in (0,1)$, such that the value function $v$ defined in \eqref{eq:Value_function_stat} belongs to $C^{\alpha}(\RR^n)$. 
\item[(ii)] \emph{(Lipschitz continuity)}
If in addition we have that 
\begin{equation}
\label{eq:Cond_for_Lipschitz_stat}
c_0\geq [b]_{C^{0,1}(\RR^n)},
\end{equation}
then the value function $v$ in \eqref{eq:Value_function_stat} belongs to $C^{0,1}(\RR^n)$.
\end{enumerate}
\end{prop}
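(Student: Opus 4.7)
The plan is to reduce the problem to a pathwise stability estimate for the underlying SDE, then combine that with the Lipschitz regularity of $c$, $\varphi$, $f$ and the exponential discount $e^{-c_0 t}$. The starting point is the standard inequality
$$|v(x_1) - v(x_2)| \leq \sup_{\tau \in \cT}|v(x_1;\tau) - v(x_2;\tau)|,$$
so it suffices to control $|v(x_1;\tau) - v(x_2;\tau)|$ \emph{uniformly} in $\tau$. The key simplification coming from hypothesis \eqref{eq:Jump_size} is that the jump part of equation \eqref{eq:Process} does not depend on the state, so the compensated Poisson integral cancels in the difference $X^{x_1}(t) - X^{x_2}(t)$. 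One is left with a deterministic (path-by-path) integral equation $X^{x_1}(t) - X^{x_2}(t) = (x_1 - x_2) + \int_0^t (b(X^{x_1}(s)) - b(X^{x_2}(s)))\, ds$, and Gronwall's inequality then yields
$$|X^{x_1}(t) - X^{x_2}(t)| \leq |x_1 - x_2|\, e^{[b]_{C^{0,1}} t}, \quad t \geq 0, \text{ a.s.}$$

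Writing $E_i(t) = e^{-\int_0^t c(X^{x_i}(s))\, ds}$ and using $|e^{-a} - e^{-b}| \leq e^{-\min(a,b)}|a - b|$ together with Lipschitz continuity of $c$, $\varphi$, $f$, and the lower bound $c \geq c_0$, I expect to obtain the \emph{Lipschitz-type} integrand estimate
$$|E_1(t)\varphi(X^{x_1}(t)) - E_2(t)\varphi(X^{x_2}(t))| \leq C\, e^{([b]_{C^{0,1}} - c_0)t}\, |x_1 - x_2|,$$
and an analogous estimate for the integrand $E_i(t) f(X^{x_i}(t))$ appearing in the source term of \eqref{eq:Value_function_stat_aux}, where $C$ depends only on the Lipschitz seminorms and sup-norms of $c,\varphi,f$. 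In parallel, boundedness of $\varphi, f$ combined with $E_i(t) \leq e^{-c_0 t}$ yields the \emph{a priori} bound $|E_1 \varphi(X^{x_1}) - E_2 \varphi(X^{x_2})| \leq 2\|\varphi\|_\infty e^{-c_0 t}$, and similarly for the source integrand.

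For part (ii), when $c_0 \geq [b]_{C^{0,1}}$, the exponent $[b]_{C^{0,1}} - c_0$ is nonpositive, so the Lipschitz-type bound is summable in $t$. Plugging into the definition \eqref{eq:Value_function_stat_aux} and taking expectation yields $|v(x_1;\tau) - v(x_2;\tau)| \leq C|x_1-x_2|$ uniformly in $\tau$, hence $v \in C^{0,1}(\RR^n)$. For part (i), when $c_0 < [b]_{C^{0,1}}$, the Lipschitz-type bound grows in $t$; I will interpolate it against the uniform bound via $\min(a,b) \leq a^\alpha b^{1-\alpha}$, producing a pointwise estimate of the form $C\, |x_1 - x_2|^\alpha\, e^{(\alpha [b]_{C^{0,1}} - c_0)t}$ on each integrand, which is uniformly integrable in $t$ provided $\alpha < c_0/[b]_{C^{0,1}}$. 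Taking any such $\alpha = \alpha([b]_{C^{0,1}}, c_0) \in (0,1)$ gives the claimed H\"older continuity.

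The main technical obstacle is the source term, whose contribution is the integral $\int_0^\tau \cdots\, dt$ with $\tau$ possibly unbounded: one must carefully balance the two pointwise estimates so that, after interpolation, the resulting exponent in $t$ is strictly negative, which dictates the precise H\"older exponent $\alpha$. The reduction to a pathwise SDE estimate (possible only because the jumps are state-independent by \eqref{eq:Jump_size}) is what makes this argument clean; without it, one would need an $L^2$ estimate on $|X^{x_1}(t) - X^{x_2}(t)|$ which typically introduces additional growth and would weaken the accessible H\"older exponent.
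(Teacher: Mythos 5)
Your proposal is correct and follows essentially the same route as the paper: the state-independence of the jumps from \eqref{eq:Jump_size} reduces everything to the pathwise Gronwall bound $|X^{x_1}(t)-X^{x_2}(t)|\le |x_1-x_2|\,e^{[b]_{C^{0,1}(\RR^n)}t}$, which is then played off against the uniform decay $e^{-c_0t}$ supplied by \eqref{eq:c_lower_bound} and the boundedness of $\varphi$ and $f$. The only (cosmetic) difference is in how the two competing bounds are balanced: the paper truncates at a single time $T$ chosen so that $e^{-c_0T}=|x_1-x_2|^{c_0/[b]_{C^{0,1}(\RR^n)}}$, whereas you interpolate pointwise in $t$ via $\min(a,b)\le a^{\alpha}b^{1-\alpha}$; both yield the same H\"older exponent up to an arbitrarily small loss.
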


\begin{defn}[Viscosity solutions]
\label{defn:Viscosity_sol_stat}
Let $v\in C(\RR^n)$. We say that $v$ is a viscosity subsolution (supersolution) to the stationary obstacle problem 
\eqref{eq:Stationary_obs_problem} if, for all $u\in C^2(\RR^n)$ such that $v-u$ has a global max (min) at $x_0\in\RR^n$ and 
$u(x_0)=v(x_0)$, then
\begin{equation}
\label{eq:Sub_super_sol_stat}
\min\{-Lu(x_0) + c(x_0) u(x_0) - f(x_0), u(x_0) - \varphi(x_0)\} \leq (\geq)\, 0.
\end{equation}
We say that $v$ is a viscosity solution to equation \eqref{eq:Stationary_obs_problem} if it is both a sub- and supersolution.
\end{defn}

\begin{thm}[Existence of viscosity solution]
\label{thm:Existence_stat}
Suppose that the hypotheses of Proposition~\ref{prop:Regularity_stat} hold, and that
\begin{equation}
\label{eq:Jump_mart_stat}
\int_{\RR^n\setminus\{O\}} |y|^{2\alpha}\, \nu(dy) <\infty
\end{equation}
where $\alpha\in (0,1)$ is the constant appearing in Proposition~\ref{prop:Regularity_stat} (i).
Then the value function $v$ defined in \eqref{eq:Value_function_stat} is a viscosity solution to the stationary obstacle problem \eqref{eq:Stationary_obs_problem}.
\end{thm}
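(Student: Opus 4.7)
The plan is to combine a dynamic programming principle (DPP) for the value function $v$ with the Dynkin-type martingale identity associated to $L$ on smooth test functions. First, I would establish that for every stopping time $\sigma \in \cT$ and every $x\in\RR^n$,
\[
v(x) \geq \EE\!\left[\int_0^\sigma e^{-\int_0^s c(X^x(r))\,dr} f(X^x(s))\,ds + e^{-\int_0^\sigma c(X^x(s))\,ds}\, v(X^x(\sigma))\right],
\]
with equality whenever $\sigma$ is dominated by the first hitting time of the coincidence set $\{v=\varphi\}$. The inequality follows from the strong Markov property and the definition \eqref{eq:Value_function_stat} by the usual concatenation argument for admissible stopping times, while the equality on the strict continuation set is the standard Snell-envelope characterization, which exploits continuity of $v$ from Proposition~\ref{prop:Regularity_stat}.

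The second ingredient is the Dynkin formula: for any $u\in C^2(\RR^n)$ with bounded derivatives and any bounded stopping time $\sigma$,
\[
u(x) = \EE\!\left[e^{-\int_0^\sigma c(X^x(s))\,ds} u(X^x(\sigma)) + \int_0^\sigma e^{-\int_0^s c(X^x(r))\,dr}\bigl(-Lu + cu\bigr)(X^x(s))\,ds\right].
\]
This follows from It\^o's formula applied to $e^{-\int_0^t c(X^x(s))\,ds}u(X^x(t))$, using Assumption~\ref{assump:Coeff} (in particular \eqref{eq:F_Lipschitz_squared}--\eqref{eq:F_Levy_cond}) to verify that the compensated Poisson stochastic integrals produce a genuine martingale up to $\sigma$.

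For the subsolution property, let $u\in C^2(\RR^n)$ satisfy $v\leq u$ on $\RR^n$ and $v(x_0)=u(x_0)$. If $v(x_0)=\varphi(x_0)$ the statement is immediate; otherwise continuity of $v$ and $\varphi$ yields $r>0$ with $v>\varphi$ on $B_r(x_0)$. Setting $\sigma_h := \tau_r\wedge h$, where $\tau_r$ is the first exit of $X^{x_0}$ from $B_r(x_0)$, the DPP holds with equality at $\sigma_h$; combining with $v\leq u$ and the Dynkin formula yields
\[
0 \leq \frac{1}{h}\,\EE\!\left[\int_0^{\sigma_h} e^{-\int_0^s c(X^{x_0}(r))\,dr}\bigl(Lu - cu + f\bigr)(X^{x_0}(s))\,ds\right].
\]
Letting $h\downarrow 0$ via dominated convergence, and using right-continuity of the paths at $t=0$ together with continuity of $Lu$, $cu$ and $f$ at $x_0$, produces $-Lu(x_0) + c(x_0)u(x_0) - f(x_0) \leq 0$. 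The supersolution case is symmetric: the inequality $v(x_0)\geq\varphi(x_0)$ follows by taking $\tau\equiv 0$ in \eqref{eq:Value_function_stat_aux}, and then the DPP inequality with $\sigma=h$ combined with $v\geq u$, the Dynkin formula, and the limit $h\to 0$ yield $-Lu(x_0) + c(x_0)u(x_0) - f(x_0) \geq 0$.

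The main obstacle is justifying the DPP equality on the continuation region at the level of rigor required here. The supermartingale inequality is routine, but identifying $v$ with the Snell envelope of $\varphi$ along the dynamics of $X^x$ requires a uniform-integrability argument for the discounted gain process. Since $v$ is only $\alpha$-H\"older, the increments $v(X^x(t-)+y)-v(X^x(t-))$ are controlled only by a multiple of $|y|^\alpha$, which is exactly where the extra moment hypothesis \eqref{eq:Jump_mart_stat} enters: it ensures $L^2$-control of the compensated jump martingale associated to $v(X^x)$, legitimizing the optional-stopping step in the Snell envelope construction and allowing one to pass from expectations involving $v$ to expectations involving the smooth test function $u$.
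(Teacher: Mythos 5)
Your overall architecture (a dynamic programming principle plus an It\^o/Dynkin identity for the $C^2$ test function, with \eqref{eq:Jump_mart_stat} and the H\"older regularity of $v$ from Proposition~\ref{prop:Regularity_stat} guaranteeing that the compensated jump integral is a genuine $L^2$-martingale) matches the paper's, and your supersolution argument and your identification of where \eqref{eq:Jump_mart_stat} enters are both consistent with what the paper does. The genuine divergence is in the subsolution step, and it is exactly at the point you flag yourself. You invoke the \emph{equality} form of the DPP on the continuation region --- that the discounted value process is a martingale up to the hitting time of $\{v=\varphi\}$, i.e.\ the Snell-envelope characterization --- and then get the conclusion cleanly by taking $\sigma_h=\tau_r\wedge h$, dividing by $h$, and letting $h\downarrow 0$. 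The paper deliberately avoids ever proving that equality. Its Lemma~\ref{lem:DPP_stat} establishes only the sup-form $v(x)=\sup\{v(x;r,\tau):\tau\le\tau_r\}$ (via conditioning on $\cF_{\tau_r}$ for one inequality and a measurable-partition/concatenation of near-optimal stopping times for the other), and the subsolution property is then obtained by contradiction: assuming $(-L+c)u\ge f+\eps$ on $B_r(x_0)$, choosing near-optimal stopping times $\tau^\delta$ from the sup-form DPP, and showing $\liminf_{\delta\to0}\EE[\tau^\delta\wedge\tau_r\wedge\tfrac1M]>0$ (else $\tau^\delta\wedge\tau_r\to0$ would force $\varphi(x_0)\ge v(x_0)$, contradicting $v(x_0)>\varphi(x_0)$).

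So the step you defer to ``standard Snell-envelope theory'' is not a routine citation in this setting: it is the entire hard content that the paper's contradiction argument is engineered to circumvent. The supermartingale inequality $v(x)\ge\EE[\cdots+e^{-\int_0^\sigma c}v(X^x(\sigma))]$ is indeed provable by the concatenation argument (it is Step~2 of the paper's Lemma~\ref{lem:DPP_stat}), and that alone suffices for your supersolution direction. But the reverse inequality for $\sigma$ dominated by the hitting time of the coincidence set requires showing that near-optimal stopping times do not stop prematurely inside $\{v>\varphi\}$ --- which is, in essence, the paper's claim \eqref{eq:Subsol_stat_contrad_1} in disguise. To make your proof complete you must either (a) actually carry out the Snell-envelope martingale argument (class (D) and right-continuity in expectation of the reward process, optional stopping at the d\'ebut of $\{v=\varphi\}$ --- feasible here since $\varphi,f$ are bounded and $c\ge c_0>0$, but several paragraphs of work), or (b) retreat to the sup-form DPP and run the near-optimal-stopping-time contradiction as the paper does. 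What your route buys, once the equality is granted, is a cleaner and more quantitative local argument (a genuine $h\downarrow 0$ limit instead of an $\eps$--$\delta$ contradiction); what the paper's route buys is that it never needs more than the sup-form DPP, which is strictly easier to justify.
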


\begin{thm}[Uniqueness of viscosity solution]
\label{thm:Uniqueness_stat}
Suppose that Assumption~\ref{assump:Coeff} holds, that $c,f,\varphi$ belong to $C(\RR^n)$, and $c$ satisfies condition \eqref{eq:c_lower_bound}. If the stationary obstacle problem \eqref{eq:Stationary_obs_problem} has a viscosity solution, then it is unique.
\end{thm}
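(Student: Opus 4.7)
The plan is to establish a comparison principle: if $u,v\in C(\RR^n)$ are bounded viscosity sub- and supersolutions of \eqref{eq:Stationary_obs_problem}, respectively, then $u\leq v$ on $\RR^n$. Since every viscosity solution is simultaneously a sub- and a supersolution, applying the principle symmetrically to two such solutions forces equality. Suppose, toward a contradiction, that $M:=\sup_{\RR^n}(u-v)>0$. For $\epsilon,\delta>0$ introduce the penalized functional
\[
\Phi_{\epsilon,\delta}(x,y) := u(x) - v(y) - \frac{|x-y|^2}{2\epsilon} - \frac{\delta}{2}\bigl(|x|^2+|y|^2\bigr),
\]
which by boundedness attains a positive maximum at some $(x_\epsilon, y_\epsilon)$. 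Standard penalization estimates give, on a diagonal subsequence as $\epsilon,\delta\to 0$, that $|x_\epsilon-y_\epsilon|^2/\epsilon \to 0$, $\delta(|x_\epsilon|^2+|y_\epsilon|^2)\to 0$, and $u(x_\epsilon)-v(y_\epsilon)\to M$.

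Testing with constants in Definition~\ref{defn:Viscosity_sol_stat} forces $v\geq \varphi$ on $\RR^n$. If $u(x_\epsilon)\leq \varphi(x_\epsilon)$ along a subsequence, then continuity of $\varphi$ and $v$ yields $u(x_\epsilon)-v(y_\epsilon)\leq \varphi(x_\epsilon)-\varphi(y_\epsilon)+\varphi(y_\epsilon)-v(y_\epsilon)\to 0$, contradicting $M>0$. Hence $u(x_\epsilon)>\varphi(x_\epsilon)$ eventually, and the subsolution definition delivers the operator inequality $-L\phi_1(x_\epsilon)+c(x_\epsilon)u(x_\epsilon)-f(x_\epsilon)\leq 0$, with the $C^2$ test function $\phi_1(x):=|x-y_\epsilon|^2/(2\epsilon)+\delta|x|^2/2+C_1$, the constant $C_1$ chosen so that $\phi_1(x_\epsilon)=u(x_\epsilon)$. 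Symmetrically, $\psi(y):=-|x_\epsilon-y|^2/(2\epsilon)-\delta|y|^2/2+C_2$ touches $v$ from below at $y_\epsilon$ and yields $-L\psi(y_\epsilon)+c(y_\epsilon)v(y_\epsilon)-f(y_\epsilon)\geq 0$.

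Subtracting these two inequalities reduces the proof to estimating $L\phi_1(x_\epsilon)-L\psi(y_\epsilon)$. The drift piece $(b(x_\epsilon)-b(y_\epsilon))\cdot(x_\epsilon-y_\epsilon)/\epsilon + O(\delta)$ is $o(1)$ by Lipschitz continuity of $b$ and $|x_\epsilon-y_\epsilon|^2/\epsilon\to 0$. For the nonlocal piece, the key is to couple the jumps at $x_\epsilon$ and $y_\epsilon$: applying the joint maximum property of $\Phi_{\epsilon,\delta}$ at the pair $\bigl(x_\epsilon+F(x_\epsilon,z),\, y_\epsilon+F(y_\epsilon,z)\bigr)$ gives
\[
[u(x_\epsilon{+}F(x_\epsilon,z))-u(x_\epsilon)] - [v(y_\epsilon{+}F(y_\epsilon,z))-v(y_\epsilon)] \leq \phi(x_\epsilon{+}F(x_\epsilon,z),\, y_\epsilon{+}F(y_\epsilon,z)) - \phi(x_\epsilon, y_\epsilon);
\]
expanding the right-hand side produces $|F(x_\epsilon,z)-F(y_\epsilon,z)|^2/(2\epsilon)$ together with first-order cross terms which combine with the compensator corrections $\nabla\phi_1(x_\epsilon)\cdot F(x_\epsilon,z)$ and $\nabla\psi(y_\epsilon)\cdot F(y_\epsilon,z)$ to give integrable quantities. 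Invoking \eqref{eq:F_Lipschitz_squared} and Cauchy--Schwarz bounds the nonlocal contribution to $L\phi_1(x_\epsilon)-L\psi(y_\epsilon)$ by $CK|x_\epsilon-y_\epsilon|^2/\epsilon + O(\delta) = o(1)$. Combining with $c\geq c_0>0$ and (uniform) continuity of $c,f$ on the bounded set where $(x_\epsilon,y_\epsilon)$ remains, the subtracted inequality becomes $c_0\bigl(u(x_\epsilon)-v(y_\epsilon)\bigr)\leq o(1)$, and letting $\epsilon,\delta\to 0$ yields $c_0 M\leq 0$, the required contradiction.

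The main obstacle is the nonlocal estimate. Naively bounding $L\phi_1(x_\epsilon)$ and $L\psi(y_\epsilon)$ separately gives contributions of order $1/\epsilon$ each, and since $\phi_1$ is convex at $x_\epsilon$ while $\psi$ is concave at $y_\epsilon$, these contributions \emph{add} upon subtraction, producing a divergent bound. Only by coupling through the bivariate $\phi(x,y)$ via the global maximum property of $\Phi_{\epsilon,\delta}$, together with the Lipschitz-in-$L^2$ condition \eqref{eq:F_Lipschitz_squared}, does the sum $|F(x_\epsilon,z)|^2 + |F(y_\epsilon,z)|^2$ convert into the difference $|F(x_\epsilon,z)-F(y_\epsilon,z)|^2$ and the $1/\epsilon$ factor get absorbed into the vanishing quantity $|x_\epsilon-y_\epsilon|^2/\epsilon$.
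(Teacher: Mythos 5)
Your overall route is the same as the paper's: uniqueness via a comparison principle, proved by doubling of variables with the penalization $\frac{\alpha}{2}|x-y|^2+\eps(|x|^2+|y|^2)$, the dichotomy between obstacle contact and the operator inequality, and the key idea of coupling the jumps at the two maximum points so that \eqref{eq:F_Lipschitz_squared} converts $|F(x_\eps,z)|^2+|F(y_\eps,z)|^2$ into $|F(x_\eps,z)-F(y_\eps,z)|^2\leq K|x_\eps-y_\eps|^2$. That coupling inequality is indeed the heart of the matter, and your display for the increments of $u$ and $v$ is correct.

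The gap is in how you feed that coupling back into the viscosity inequalities. Under Definition~\ref{defn:Viscosity_sol_stat} the operator is evaluated \emph{entirely on the global $C^2$ test function}, so your two inequalities involve $L\phi_1(x_\eps)$ and $L\psi(y_\eps)$, whose nonlocal parts are explicitly
\[
\Bigl(\tfrac{1}{2\eps}+\tfrac{\delta}{2}\Bigr)\int_{\RR^n\setminus\{O\}}|F(x_\eps,z)|^2\,\nu(dz)
\quad\text{and}\quad
-\Bigl(\tfrac{1}{2\eps}+\tfrac{\delta}{2}\Bigr)\int_{\RR^n\setminus\{O\}}|F(y_\eps,z)|^2\,\nu(dz),
\]
so their difference is a nonnegative quantity of size $O(1/\eps)$ that no coupling of $u$ and $v$ can reduce: the maximum-point inequality controls increments of $u$ and $v$, not increments of $\phi_1$ and $\psi$, and your claim that it bounds ``the nonlocal contribution to $L\phi_1(x_\eps)-L\psi(y_\eps)$'' is exactly the divergent bound you set out to avoid. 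To make the coupling usable one must first replace the test functions by functions that coincide with the quadratic paraboloid only on a small ball $B_r$ around the touching point and with $u$ (resp.\ $v$) outside it — this is what the paper does with its auxiliary $\bar u,\bar v$ — then split the nonlocal integral into the small-jump part (where \eqref{eq:F_Levy_cond}, together with the condition $F(x,y)\to 0$ as $y\to O$ guaranteeing that small jumps land inside $B_r$, makes the contribution vanish as $r\downarrow 0$) and the large-jump part, where $\bar u=u$, $\bar v=v$ and your coupling plus \eqref{eq:F_Lipschitz_squared} applies. Since these piecewise functions are not $C^2$ across $\partial B_r$, one also needs the approximation/mollification step the paper invokes to show they are admissible in Definition~\ref{defn:Viscosity_sol_stat}; without this intermediate construction your subtraction step does not go through. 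Two smaller points: you assume $u,v$ bounded, which is not in the hypotheses (the paper is also implicitly relying on attainment of the penalized supremum via \cite[Lemma~3.1]{Crandall_Ishii_Lions_1992}); and sending $\eps,\delta\to0$ along a diagonal while invoking continuity of $c,f$ at points that are only bounded for fixed $\delta$ is safer done as iterated limits ($\alpha\to\infty$ first with the localization parameter fixed, then the localization to zero), as in the paper, since $f$ is merely continuous, not uniformly continuous.
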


\begin{rmk}[Condition \eqref{eq:c_lower_bound} on the zeroth order term $c(x)$]
\label{rmk:c_lower_bound_infty}
Condition \eqref{eq:c_lower_bound} in Theorem \ref{thm:Uniqueness_stat} can be replaced by the less restrictive assumption that $c(x)$ is a positive function on $\RR^n$ and 
\begin{equation}
\label{eq:c_lower_bound_infty}
\limsup_{|x|\to\infty}\frac{1}{|x|c(x)} = 0.
\end{equation}
\end{rmk}

\subsection{Evolution obstacle problem}
\label{subsec:Evolution}
We next consider the questions of existence, uniqueness, and regularity of viscosity solutions to the evolution obstacle problem defined by the nonlocal operator $L$,
\begin{equation}
\label{eq:Evolution_obs_problem}
\begin{aligned}
\left\{
\begin{array}{rll}
\min\{-v_t-Lv + c v - f, v - \varphi\} &= 0,& \quad\hbox{ on } [0,T)\times\RR^n,\\
v(T,\cdot) &= g,&\quad\hbox{ on } \RR^n,
\end{array}
\right.
\end{aligned}
\end{equation}
where we assume the compatibility assumption 
\begin{equation}
\label{eq:Compatibility_evol}
g\geq \varphi(T,\cdot)\quad\hbox{ on }\quad \RR^n.
\end{equation}
Let $\cT_t$ denote the set of stopping times $\tau\in\cT$ bounded by $t$, for all $t\geq 0$. Solutions to problem \eqref{eq:Evolution_obs_problem} are constructed using the stochastic representation formula,
\begin{equation}
\label{eq:Value_function_evol}
v(t,x) := \sup \{ v(t,x;\tau):\, \tau\in\cT_{T-t}\},
\end{equation}
where we define
\begin{equation}
\label{eq:Value_function_evol_aux}
\begin{aligned}
v(t,x;\tau) &:= \EE\left[e^{-\int_0^{\tau} c(t+s, X^x(s))\, ds}\varphi(t+\tau, X^x(\tau)) \mathbf{1}_{\{\tau<T-t\}}\right]\\
&\quad+ \EE \left[e^{-\int_0^{\tau} c(t+s, X^x(s))\, ds}g(X^x(T-t)) \mathbf{1}_{\{\tau = T-t\}}\right]\\
&\quad+ \EE \left[\int_0^{\tau} e^{-\int_0^s c(t+r,X^x(r))\, dr} f(t+s,X^x(s))\, ds\right],
\end{aligned}
\end{equation}
for all $(t,x)\in[0,T]\times\in\RR^n$. The value function $v(t,x)$ satisfies:

\begin{prop}[Regularity of the value function]
\label{prop:Regularity_evol}
In addition to Assumption~\ref{assump:Coeff} suppose that $c,\varphi,f$ belong to $C^{0,1}([0,T]\times\RR^n)$, the final condition $g$ is in $C^{0,1}(\RR^n)$, and the compatibility condition \eqref{eq:Compatibility_evol} holds. Then the value function $v$ defined in \eqref{eq:Value_function_evol} belongs to $C^{\frac{1}{2}}_tC^{0,1}_x([0,T]\times\RR^n)$.
\end{prop}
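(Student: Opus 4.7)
The plan is to derive the regularity of $v$ from regularity of the SDE flow together with regularity of the coefficients, using the standard inequality $|\sup_\tau A_\tau-\sup_\tau B_\tau|\leq \sup_\tau |A_\tau-B_\tau|$ to reduce everything to estimates on $v(t,x;\tau)$ for a fixed stopping time $\tau$. As a preliminary step I would record two moment bounds for the solution of \eqref{eq:Process} that follow from Assumption~\ref{assump:Coeff} via the Burkholder--Davis--Gundy inequality and Gronwall's lemma, namely
\begin{equation*}
\EE\!\left[\sup_{0\leq s\leq T}|X^{x_1}(s)-X^{x_2}(s)|^2\right]\leq C\,|x_1-x_2|^2
\end{equation*}
and
\begin{equation*}
\EE\!\left[\sup_{r\leq s\leq r+h}|X^{x}(s)-X^{x}(r)|^2\right]\leq C\,h,\qquad 0\leq r\leq r+h\leq T,
\end{equation*}
where the second estimate uses boundedness of $b$ together with $\int|F(x,y)|^2\,\nu(dy)\leq \int\rho(y)^2\,\nu(dy)\leq K$ from \eqref{eq:F_Levy_cond}.

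For the spatial Lipschitz bound I would fix $t\in[0,T]$ and $\tau\in\cT_{T-t}$ and estimate $|v(t,x_1;\tau)-v(t,x_2;\tau)|$ by expanding the three terms in \eqref{eq:Value_function_evol_aux}, adding and subtracting intermediate quantities, and invoking Lipschitz continuity of $\varphi$, $g$, $f$, and $c$ in space, the elementary bound $|e^{-a}-e^{-b}|\leq|a-b|$ together with boundedness of $c$, and the flow Lipschitz estimate above. The indicator functions $\mathbf{1}_{\{\tau<T-t\}}$ and $\mathbf{1}_{\{\tau=T-t\}}$ depend only on $\tau$, not on the starting point, so they factor out cleanly. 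Passing to the supremum in $\tau$ then yields $|v(t,x_1)-v(t,x_2)|\leq C\,|x_1-x_2|$ uniformly in $t\in[0,T]$.

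For the $\tfrac12$-H\"older continuity in $t$, fix $0\leq t_1<t_2\leq T$, set $\delta=t_2-t_1$, and treat the two directions separately by constructing admissible companion stopping times. To bound $v(t_1,x)-v(t_2,x)$ from above, I pick $\tau_1\in\cT_{T-t_1}$ that is $\varepsilon$-optimal for $v(t_1,x)$ and set $\tau_2:=\tau_1\wedge(T-t_2)\in\cT_{T-t_2}$, so that $v(t_2,x)\geq v(t_2,x;\tau_2)$, and then perform a case analysis on the events $\{\tau_1<T-t_2\}$, $\{T-t_2\leq\tau_1<T-t_1\}$, and $\{\tau_1=T-t_1\}$. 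On the first event both functionals evaluate the same $\varphi$ term and differ only by Lipschitz-in-$t$ shifts of $c$, $\varphi$, and $f$, giving $O(\delta)$; on the middle event the compatibility condition \eqref{eq:Compatibility_evol} gives
\begin{equation*}
\varphi(t_1+\tau_1,X(\tau_1))-g(X(T-t_2))\leq \varphi(t_1+\tau_1,X(\tau_1))-\varphi(T,X(T-t_2)),
\end{equation*}
and the right-hand side is controlled by $[\varphi]_{\mathrm{Lip}}\bigl(\delta+|X(\tau_1)-X(T-t_2)|\bigr)$, whose expectation is $O(\sqrt\delta)$ by the short-time flow estimate; on the third event we compare $g(X(T-t_1))$ with $g(X(T-t_2))$, again $O(\sqrt\delta)$. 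The reverse direction $v(t_2,x)-v(t_1,x)$ is handled symmetrically: take $\tau_2\in\cT_{T-t_2}$ that is $\varepsilon$-optimal for $v(t_2,x)$ and set $\tau_1:=\tau_2\,\mathbf{1}_{\{\tau_2<T-t_2\}}+(T-t_1)\,\mathbf{1}_{\{\tau_2=T-t_2\}}\in\cT_{T-t_1}$, after which the same case analysis applies. Letting $\varepsilon\downarrow 0$ completes the proof.

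The main obstacle in this program is the swap between the terminal datum $g$ and the obstacle $\varphi$ that appears precisely when one of the two stopping times hits its boundary while the other does not. Without the compatibility assumption $g\geq\varphi(T,\cdot)$ this swap could produce an order-one sign error, and the unavoidable evaluation of the SDE flow at times that differ by $\delta$ is what forces the H\"older exponent $\tfrac12$ rather than $1$.
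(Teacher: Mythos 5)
Your proposal is correct and follows essentially the same route as the paper: the two moment bounds you state are exactly Lemma~\ref{lem:Cont_X}, the spatial Lipschitz bound is obtained by the same reduction to $|v(t,x_1;\tau)-v(t,x_2;\tau)|$, and your companion stopping times $\tau_1\wedge(T-t_2)$ and $\tau_2\mathbf{1}_{\{\tau_2<T-t_2\}}+(T-t_1)\mathbf{1}_{\{\tau_2=T-t_2\}}$ are precisely the paper's $\tau''$ and $\tau'$ in \eqref{eq:Stopping_time_tau}, with the same case analysis and the same use of the compatibility condition \eqref{eq:Compatibility_evol} to handle the $\varphi$--$g$ swap. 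Using $\varepsilon$-optimal stopping times instead of taking the supremum over $\tau$ directly is only a cosmetic difference.
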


We next define a notion of viscosity solution for the evolution obstacle problem \eqref{eq:Evolution_obs_problem} extending that of its stationary analogue for equation \eqref{eq:Stationary_obs_problem} similarly to the ideas described in \cite[\S~8]{Crandall_Ishii_Lions_1992}:

\begin{defn}[Viscosity solutions]
\label{defn:Viscosity_sol_evol}
Let $v\in C(\RR^n)$. We say that $v$ is a viscosity subsolution (supersolution) to the evolution obstacle problem 
\eqref{eq:Evolution_obs_problem} if 
\begin{equation}
\label{eq:Final_cond_evol}
v(T,\cdot) \leq (\geq) g,
\end{equation}
and, for all $u\in C^1_tC^2_x([0,T]\times\RR^n)$ such that $u-v$ has a global max (min) at $(t_0,x_0)\in[0,T)\times\RR^n$ and $u(t_0,x_0)=v(t_0,x_0)$, we have that
\begin{equation}
\label{eq:Sub_super_sol_evol}
\min\{-u_t(t_0,x_0)-Lu(t_0,x_0) + c(t_0,x_0) u(t_0,x_0) - f(t_0,x_0), u(t_0,x_0) - \varphi(t_0,x_0)\} \leq (\geq)\, 0.
\end{equation}
We say that $v$ is a viscosity solution to equation \eqref{eq:Evolution_obs_problem} if it is both a sub- and supersolution.
\end{defn}

\begin{thm}[Existence of viscosity solution]
\label{thm:Existence_evol}
Suppose that the hypotheses of Proposition~\ref{prop:Regularity_evol} hold. Then the value function $v$ defined in \eqref{eq:Value_function_evol} is a viscosity solution to the evolution obstacle problem \eqref{eq:Evolution_obs_problem}.
\end{thm}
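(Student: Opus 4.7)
The strategy mirrors the stationary case (Theorem~\ref{thm:Existence_stat}): couple a dynamic programming principle (DPP) for $v$ with It\^o's formula applied to smooth test functions to extract the pointwise viscosity inequalities. Substituting $t=T$ into \eqref{eq:Value_function_evol_aux} forces $\cT_{T-t}=\{0\}$, and the indicator $\mathbf{1}_{\{\tau=T-t\}}$ collapses the formula to $v(T,x)=g(x)$, which yields both inequalities in \eqref{eq:Final_cond_evol}. The central analytical ingredient is the following DPP: for every $(t,x)\in[0,T)\times\RR^n$ and every stopping time $\theta\in\cT_{T-t}$,
\begin{equation*}
v(t,x) \geq \EE\Bigg[\int_0^\theta e^{-\int_0^s c(t+r, X^x(r))\,dr} f(t+s, X^x(s))\,ds + e^{-\int_0^\theta c(t+s, X^x(s))\,ds}\,\tilde v(t+\theta, X^x(\theta))\Bigg],
\end{equation*}
where $\tilde v := v$ on $[0,T)\times\RR^n$ and $\tilde v(T,\cdot) := g$, with equality whenever $\theta$ is dominated a.s.\ by the first exit time of $(t+s, X^x(s))$ from the open continuation region $\{v > \varphi\}$. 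This follows from the strong Markov property of $\{X^x(s)\}$ together with the continuity of $v$ established in Proposition~\ref{prop:Regularity_evol}, via the classical Snell envelope construction.

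For the subsolution property, fix a test function $u\in C^1_tC^2_x$ such that $v-u$ attains a global maximum value of zero at $(t_0, x_0)\in[0,T)\times\RR^n$, so that $v\leq u$ and $v(t_0,x_0)=u(t_0,x_0)$. If $v(t_0,x_0)\leq\varphi(t_0,x_0)$ the obstacle term in \eqref{eq:Sub_super_sol_evol} is nonpositive and there is nothing more to prove; otherwise $(t_0,x_0)$ lies in the open continuation region, so the DPP equality applies with $\theta_h := h\wedge\tau_R$, where $\tau_R$ is the first exit time of $X^{x_0}$ from $B_R(x_0)$. Combining the equality with $v\leq u$ gives
\begin{equation*}
u(t_0,x_0) = v(t_0,x_0)\leq \EE\Bigg[\int_0^{\theta_h}e^{-\int_0^s c\,dr}f\,ds + e^{-\int_0^{\theta_h}c\,ds}\,u(t_0+\theta_h, X^{x_0}(\theta_h))\Bigg].
\end{equation*}
Applying It\^o's formula to the semimartingale $s\mapsto e^{-\int_0^s c(t_0+r,X^{x_0}(r))\,dr}u(t_0+s, X^{x_0}(s))$ on $[0,\theta_h]$ and taking expectations eliminates the martingale terms (integrability of the nonlocal contribution is secured by $u\in C^2$ and the moment bound in \eqref{eq:F_Levy_cond}), yielding
\begin{equation*}
0 \leq \EE\Bigg[\int_0^{\theta_h}e^{-\int_0^s c\,dr}\bigl(u_t + Lu - cu + f\bigr)(t_0+s, X^{x_0}(s))\,ds\Bigg].
\end{equation*}
Dividing by $\EE[\theta_h]>0$ and letting $h\to 0$, dominated convergence and continuity of the integrand at $s=0$ produce $-u_t - Lu + cu - f \leq 0$ at $(t_0,x_0)$.

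The supersolution argument runs in parallel. The choice $\tau=0$ in \eqref{eq:Value_function_evol} immediately gives $v\geq\varphi$ on $[0,T)\times\RR^n$, so the obstacle inequality is automatic. For the PDE inequality, invoke the (always valid) DPP inequality with a test function $u\leq v$ satisfying $u(t_0,x_0)=v(t_0,x_0)$; the It\^o computation then reverses all inequalities and delivers $-u_t - Lu + cu - f \geq 0$ at $(t_0,x_0)$. The principal technical hurdle is the rigorous justification of the DPP, especially the equality version inside the continuation region and the handling of the terminal data $g$ near $t=T$; both rely on the continuity of $v$ from Proposition~\ref{prop:Regularity_evol}, together with careful bookkeeping of the indicators $\mathbf{1}_{\{\tau<T-t\}}$ and $\mathbf{1}_{\{\tau=T-t\}}$ in \eqref{eq:Value_function_evol_aux} and of the compensated jump integrals against $\nu$.
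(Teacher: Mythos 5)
Your proposal is correct in outline and follows the same broad strategy as the paper (a dynamic programming principle combined with It\^o's formula applied to the test function), but the two ingredients are realized quite differently. The paper's proof of Theorem~\ref{thm:Existence_evol} is by reduction to the stationary argument of Theorem~\ref{thm:Existence_stat}, with Lemma~\ref{lem:DPP_evol} in place of Lemma~\ref{lem:DPP_stat}: the DPP there is localized to exit times $\tau_r$ from spatial balls, with $\varphi$ as the reward for stopping strictly before $\tau_r$ and $v$ itself as the value at $\tau_r$; the subsolution inequality is then obtained by contradiction, assuming $-u_t-Lu+cu\geq f+\eps$ near $(t_0,x_0)$ and playing near-optimal stopping times $\tau^{\delta}$ against the lower bound $\liminf_{\delta\to0}\EE[\tau^{\delta}\wedge\tau_r\wedge\tfrac1M]>0$, which itself uses $v(t_0,x_0)>\varphi(t_0,x_0)$. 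You instead use the Snell-envelope form of the DPP --- supermartingale inequality for every stopping time, martingale equality up to the first exit from $\{v>\varphi\}$ --- and extract the inequalities directly by taking $\theta_h=h\wedge\tau_R$, dividing by $\EE[\theta_h]$ and letting $h\to0$. This is cleaner (no $\eps$-contradiction, no $\liminf$ lemma) and your treatment of the terminal condition, of the supersolution half, and of the integrability of the jump martingale (bounded second derivatives of $u$ plus \eqref{eq:F_Levy_cond}, with no need for \eqref{eq:Jump_mart_stat}) are all sound.

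The one point to flag is that the equality half of your DPP --- the martingale property of the discounted value process up to the first exit from the continuation region --- is precisely the crux, and you assert it via ``the classical Snell envelope construction'' rather than prove it. It is true here (the reward is continuous by Proposition~\ref{prop:Regularity_evol} and solutions of \eqref{eq:Process} are quasi-left-continuous strong Markov processes), but justifying it requires essentially the same regular-conditional-probability and measurable-selection work that the paper carries out in Step~2 of Lemma~\ref{lem:DPP_stat}, and it is a genuinely stronger statement than Lemma~\ref{lem:DPP_evol} as written. You should also make explicit that $h$ and $R$ are chosen small enough that $[t_0,t_0+h]\times B_R(x_0)$ lies inside the open continuation region, so that $\theta_h$ is indeed dominated by the exit time from $\{v>\varphi\}$, and note that $\PP(\tau_R>0)=1$ guarantees $\EE[\theta_h]>0$. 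With those points supplied, the argument is complete.
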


\begin{rmk}[Assumptions in evolution vs stationary cases]
We note that in the case of the evolution obstacle problem we allow the jump size $F(x,y)$ to depend on the spatial variable $x$, in contrast to assumption \eqref{eq:Jump_size} in the case of the stationary obstacle problem.

We also note that we do not require condition \eqref{eq:Jump_mart_stat} to
hold in the statement of Theorem~\ref{thm:Existence_evol}. We are able
to remove this condition because in the evolution case Proposition~\ref{prop:Regularity_evol} shows that the value function is Lipschitz
continuous in the spatial variable, as opposed to the stationary case
where we prove in Proposition~\ref{prop:Regularity_stat} that the
value function is $\alpha$-H\"older continuous.
\end{rmk}

\begin{thm}[Uniqueness of viscosity solution]
\label{thm:Uniqueness_evol}
Suppose that Assumption~\ref{assump:Coeff} is satisfied, $g$ belongs to $C(\RR^n)$, $c,f,\varphi$ are in $C([0,T]\times\RR^n)$, the compatibility condition \eqref{eq:Compatibility_evol} holds, and 
\begin{equation}
\label{eq:Interior_point}
\lim_{y\to O} F(x,y) = 0,\quad\forall\, x\in\RR^n.
\end{equation}
If the obstacle problem \eqref{eq:Evolution_obs_problem} has a solution, then it is unique.
\end{thm}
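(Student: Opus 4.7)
The plan is to prove a comparison principle suited to continuous viscosity sub- and supersolutions of \eqref{eq:Evolution_obs_problem}. Let $v_1$ be a viscosity subsolution and $v_2$ a viscosity supersolution within an admissible growth class (e.g., bounded, or at most linearly growing in $x$). The goal is to show $v_1\leq v_2$ on $[0,T]\times\RR^n$, whence uniqueness follows by interchanging $v_1$ and $v_2$. As a preliminary reduction, the substitution $\tilde v(t,x):=e^{\lambda(T-t)}v(t,x)$ transforms \eqref{eq:Evolution_obs_problem} into an obstacle problem with coefficient $c+\lambda$ in place of $c$; choosing $\lambda$ sufficiently large I may assume without loss of generality that $c\geq c_0>0$. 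I would also replace $v_1$ by the strict subsolution $\tilde v_1(t,x):=v_1(t,x)-\eta/(T-t+\eta)$, with $\eta>0$ to be sent to zero at the end.

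Assuming for contradiction that $M:=\sup_{[0,T]\times\RR^n}(\tilde v_1-v_2)>0$, I would apply the doubling-of-variables method of Crandall--Ishii--Lions, as adapted to nonlocal equations by Barles and Imbert, to
\begin{equation*}
\Phi_{\varepsilon,\beta}(t,x,s,y):=\tilde v_1(t,x)-v_2(s,y)-\frac{|x-y|^2+(t-s)^2}{2\varepsilon}-\beta\bigl(\sqrt{1+|x|^2}+\sqrt{1+|y|^2}\bigr).
\end{equation*}
For $\varepsilon,\beta$ small, $\Phi_{\varepsilon,\beta}$ attains a positive maximum at some $(\hat t,\hat x,\hat s,\hat y)$, with the standard estimate $(|\hat x-\hat y|^2+(\hat t-\hat s)^2)/\varepsilon\to 0$ as $\varepsilon\downarrow 0$. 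The terminal inequalities and the strict-subsolution perturbation force $\hat t,\hat s<T$, while the penalty keeps $(\hat x,\hat y)$ in a compact set.

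A dichotomy then arises from the min-form of the obstacle problem. If $\tilde v_1(\hat t,\hat x)\leq\varphi(\hat t,\hat x)$, then combining with the supersolution obstacle bound $v_2(\hat s,\hat y)\geq\varphi(\hat s,\hat y)$, the continuity of $\varphi$, and the smallness of $|\hat x-\hat y|$ contradicts $M>0$. Otherwise both viscosity inequalities reduce to the equation part, and I would test $\tilde v_1$ and $v_2$ respectively against the natural upper and lower test functions derived from the penalty, namely
\begin{equation*}
\phi_1(t,x):=v_2(\hat s,\hat y)+\frac{|x-\hat y|^2+(t-\hat s)^2}{2\varepsilon}+\beta\sqrt{1+|x|^2}+\beta\sqrt{1+|\hat y|^2}+\frac{\eta}{T-t+\eta}
\end{equation*}
and the symmetric $\phi_2$. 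Subtracting the resulting viscosity inequalities produces a bound whose left-hand side contains $\eta/(T-\hat t+\eta)^2+c_0 M$, and whose right-hand side contains differences of drift, zeroth-order, and source terms (controlled by the Lipschitz bound on $b$, the continuity of $c,f$, and $|\hat x-\hat y|\to 0$) together with a residual from the nonlocal integrals.

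The main obstacle is estimating the nonlocal residual. I would split each integral $\int_{\RR^n\setminus\{O\}}\cdots\,\nu(dy)$ at a small radius $\delta>0$. On $B_\delta(O)$ the hypothesis \eqref{eq:Interior_point} ensures $F(x,y)\to 0$ as $y\to O$, so the Taylor expansion $|u(x+F(x,y))-u(x)-\nabla u(x)\dotprod F(x,y)|\leq \tfrac12\|D^2 u\|_\infty |F(x,y)|^2$ applied to $\phi_1,\phi_2$ (whose second derivatives are of order $\varepsilon^{-1}+\beta$) together with Assumption~\ref{assump:Coeff} and \eqref{eq:F_Levy_cond} yields control of order $(\varepsilon^{-1}+\beta)\int_{B_\delta}\rho(y)^2\,\nu(dy)$, which can be made arbitrarily small by first choosing $\delta=\delta(\varepsilon,\beta)$ small. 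On $\RR^n\setminus B_\delta(O)$ one exploits the maximum-point inequality $\Phi_{\varepsilon,\beta}(\hat t,\hat x+F(\hat x,y),\hat s,\hat y+F(\hat y,y))\leq\Phi_{\varepsilon,\beta}(\hat t,\hat x,\hat s,\hat y)$, which together with the Lipschitz estimate \eqref{eq:F_Lipschitz_squared} for $F$ in $x$ bounds the residual by lower-order quantities that vanish as $\varepsilon\downarrow 0$. Passing successively to the limits $\delta\downarrow 0$, $\varepsilon\downarrow 0$, $\beta\downarrow 0$, and $\eta\downarrow 0$ in the coupled manner just described, the right-hand side tends to zero while the left-hand side stays $\geq c_0 M>0$, yielding the desired contradiction.
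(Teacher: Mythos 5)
Your proposal is correct and follows essentially the same route as the paper: uniqueness is derived from a comparison principle (Theorem~\ref{thm:Comparison_princ_evol}), proved by the same doubling-of-variables argument with the same $e^{\lambda(T-t)}$ reduction to a positive zeroth-order coefficient, the same dichotomy on the obstacle, and the same small-jump/large-jump splitting of the nonlocal term using \eqref{eq:Interior_point}, \eqref{eq:F_Levy_cond}, \eqref{eq:F_Lipschitz_squared}, and the maximum-point inequality. The only cosmetic differences are your strict-subsolution perturbation in time and your appeal to the Barles--Imbert nonlocal lemma where the paper instead builds the explicit piecewise-quadratic test functions $\bar u,\bar v$ and invokes a mollification remark.
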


\subsection{Applications to mathematical finance}
\label{sec:Applications}
In mathematical finance stochastic representations of the form \eqref{eq:Value_function_stat} and \eqref{eq:Value_function_evol} have the meaning of the prices of American perpetual and finite expiry options, respectively. To make this correspondence, in the evolution obstacle problem \eqref{eq:Evolution_obs_problem}, we set $g\equiv \varphi$, $f\equiv 0$, and we choose the obstacle function $\varphi$ to depend only on the spatial variable $x$, and to coincide with the payoff of the American option. In addition, we assume that $n=1$, the zeroth order term $c\equiv r>0$, where $r$ is the risk-free interest rate, and the asset price process can be written in the form $S(t)=e^{X(t)}$, where $\{X(t)\}_{t\geq 0}$ solves the stochastic equation \eqref{eq:Process}. Most importantly, we need to ensure that the discounted asset price process $\{e^{-rt} S(t)\}_{t\geq 0}$ is a martingale in order to obtain an arbitrage-free market. Because the markets containing asset prices driven by discontinuous L\'evy processes that are not Poisson processes are incomplete, the motivation to choose to price options using the risk-free probability measure given by the distribution of the asset price requires careful thought. However, we do not address this problem in our paper, but see \cite[\S~1.3.4]{Boyarchenko_Levendorskii_2002b} and 
\cite[Chapter~9]{ContTankov} for more discussions on this problem.

Assume that $\{X(t)\}_{t\geq 0}$ is a one-dimensional L\'evy process that satisfies the stochastic equation:
\begin{equation}
\label{eq:Process_const_drift}
dX(t) = b\, dt + \int_{\RR^n} y \, \widetilde N(dt, dy),\quad\forall\, t>0,
\end{equation}
where $b$ is a real constant and $\widetilde N(dt, dy)$ is a compensated Poisson random measure with L\'evy measure $\nu(dy)$. Using \cite[Theorem~5.2.4 and Corollary~5.2.2]{Applebaum} a sufficient 
condition that guarantees that the discounted asset price process $\{e^{-rt+X(t)}\}_{t\geq 0}$ is a martingale is:
\begin{equation}
\label{eq:Martingale_cond}
\begin{aligned}
\int_{|x|\geq 1} e^x\, \nu(dx) &<\infty,\\
-r+\psi(-i) &= 0,
\end{aligned}
\end{equation} 
where $\psi(\xi)$ denotes the characteristic exponent of the L\'evy process $\{X(t)\}_{t\geq 0}$, that is,
\begin{equation}
\label{eq:Characteristic_exponent}
\psi(\xi) = ib\xi+\int_{\RR\setminus\{0\}} (e^{ix\xi}-1-ix\xi)\,\nu(dx).
\end{equation} 
Examples in mathematical finance to which our results applies include the Variance Gamma Process \cite{Madan_Seneta_1990} and Regular L\'evy Processes of Exponential type (RLPE) \cite{Boyarchenko_Levendorskii_2002b}. 

\subsubsection{Variance Gamma Process}
\label{sec:VG}
Following \cite[Identity (6)]{Carr_Geman_Madan_Yor_2002}, the Variance Gamma Process $\{ X(t)\}_{t\geq 0}$ with parameters $\nu,\sigma,$ and $\theta$ has L\'evy measure given by
\begin{align*}
\nu(dx) = \frac{1}{\nu|x|}\left(e^{-\frac{|x|}{\eta_p}}\mathbf{1}_{\{x>0\}} + e^{-\frac{|x|}{\eta_n}}\mathbf{1}_{\{x<0\}}\right)\, dx,
\end{align*}
where $\eta_p>\eta_n$ are the roots of the equation $x^2-\theta\nu x-\sigma^2\nu/2=0$, and $\nu,\sigma, \theta$ are positive constants. From \cite[Identity (4)]{Carr_Geman_Madan_Yor_2002}, we have that the characteristic exponent of the Variance Gamma Process with constant drift $b\in\RR$, $\{X(t)+bt\}_{t\geq 0}$, has the expression:
$$
\psi_{\hbox{\tiny{VG}}}(\xi) = \frac{1}{\nu}\ln \left(1-i\theta\nu\xi + \frac{1}{2}\sigma^2\nu\xi^2\right) + ib\xi,
\quad\forall\, \xi\in\RR,
$$
and so the infinitesimal generator of $\{X(t)+bt\}_{t\geq 0}$ is given by
$$
L = \frac{1}{\nu}\ln(1-\theta\nu\nabla - \frac{1}{2}\sigma^2\Delta)+b\dotprod\nabla,
$$
which is a sum of a pseudo-differential operator of order less that any $s>0$ and one of order $1$. When $\eta_p<1$ and $r=\psi_{VG}(-i)$\footnote{The fact that $r>0$ and $r=\psi_{VG}(-i)$ implies that $1+\theta\nu - \frac{1}{2}\sigma^2\nu$ is a positive constant, and the drift $b$ satisfies the inequality $b>-\frac{1}{\nu}\ln \left(1+\theta\nu - \frac{1}{2}\sigma^2\nu\right)$.}, condition \eqref{eq:Martingale_cond} is satisfied and the discounted asset price process $\{e^{-rt+X(t)}\}_{t\geq 0}$ is a martingale. Thus, applying the results in \S~\ref{subsec:Stationary} and \S~\ref{subsec:Evolution} to the Variance Gamma Process $\{X(t)\}_{t\geq 0}$ with constant drift $b$, we obtain that the prices of perpetual and finite expiry American options with bounded and Lipschitz payoffs are Lipschitz functions in the spatial variable. Given that the nonlocal component of the infinitesimal generator $L$ has order less than any $s>0$, this may be the optimal regularity of solutions that we can expect.

\subsubsection{Regular L\'evy Processes of Exponential type}
\label{sec:RLPE}
Following \cite[Chapter~3]{Boyarchenko_Levendorskii_2002b}, for parameters $\lambda_{-} < 0 < \lambda_{+}$, a L\'evy process is said to be of exponential type $[\lambda_{-}, \lambda_{+}]$ if it has a L\'evy measure $\nu(dx)$ such that 
$$
\int_{-\infty}^{-1} e^{-\lambda_{+}x} \nu(dx) + \int_1^{\infty} e^{-\lambda_{-}x}\nu(dx) <\infty.
$$ 
A non-Gaussian L\'evy process is said to be a Regular L\'evy Processes of Exponential type $[\lambda_{-}, \lambda_{+}]$ and order $\nu$ if it has exponential type $[\lambda_{-}, \lambda_{+}]$ and, in a neighborhood of zero, the L\'evy measure can be represented as $\nu(dx) = f(x)\,dx$, where $f(x)$ satisfies the property that
$$
|f(x) - c|x|^{-\nu-1}| \leq C|x|^{-\nu'-1},\quad\forall\, |x|\leq 1,
$$
for constants $\nu'<\nu$, $c>0$, and $C>0$. Our results apply to RLPE type $[\lambda_{-}, \lambda_{+}]$, when we choose the parameters $\lambda_-\leq-1$ and $\lambda_+ \geq 1$\footnote{See the first identity in condition \eqref{eq:Martingale_cond}.}.

The class of RLPE include the CGMY/KoBoL processes introduced in \cite{Carr_Geman_Madan_Yor_2002}. Following \cite[Equation (7)]{Carr_Geman_Madan_Yor_2002}, CGMY/KoBoL processes are characterized by a L\'evy measure of the form
$$
\nu(dx) = \frac{C}{|x|^{1+Y}}\left(e^{-G|x|}\mathbf{1}_{\{x<0\}} + e^{-M|x|}\mathbf{1}_{\{x>0\}}\right)\, dx,
$$
where the parameters $C>0$, $G, M \geq 0$, and $Y<2$. Our results apply to CGMY/KoBoL processes, when we choose the parameter $M>1$ and $Y<2$, or $M=1$ and $0<Y<2$\footnote{See the first identity in condition \eqref{eq:Martingale_cond}.}.

We remark that a sufficient condition on the L\'evy measure to ensure that perpetual American put option prices are Lipschitz continuous, but not continuously differentiable, is provided in \cite[Theorem~5.4, p.~133]{Boyarchenko_Levendorskii_2002b}. However, the condition is in terms of the Wiener-Hopf factorization for the characteristic exponent of the L\'evy process, and it is difficult to find a concrete example for which it holds.

\subsection{Comparison with previous research}
\label{subsec:Comparison}
In \cite{Mordecki_2002}, the author establishes closed-form formulas for prices of perpetual American call and put options on a stock driven by a general L\'evy process, in terms of the distribution of the supremum and the infimum of the process, respectively. In \cite{Boyarchenko_Levendorskii_2002, Boyarchenko_Levendorskii_2002b}, in the framework of Regular L\'evy Processes of Exponential type, the authors obtain closed-form formulas for prices of perpetual American call and put options via the Wiener-Hopf factorization method distribution of the supremum and the infimum of the process. Compared with \cite{Mordecki_2002, Boyarchenko_Levendorskii_2002, Boyarchenko_Levendorskii_2002b}, in our work we allow more general payoff functions for which we study both the perpetual and the finite expiry American options, together with the regularity properties of the option prices. Our results apply to multi-dimensional Markov processes that may not be L\'evy processes, but when restricted to the class of L\'evy processes, 
we allow both a more restrictive family than in \cite{Mordecki_2002}, but also more general than in \cite{Boyarchenko_Levendorskii_2002, Boyarchenko_Levendorskii_2002b}.

The nonlocal operators most often studied in the context of obstacle problems are stable-like \cite{Caffarelli_Ros-Oton_Serra_2016}. However, the nonlocal operators often arising in applications in mathematical finance are not of this form, and in our work we include operators relevant in this field as we described in \S~\ref{sec:Applications}. Their analytic properties appear to be quite different, as the case of Variance Gamma Processes in \S~\ref{sec:VG} shows, and we prove regularity properties of solutions using probabilistic and viscosity solutions arguments. 
The Lipschitz regularity of solutions that we establish in Theorem~\ref{thm:Existence_stat} is optimal for a subclass of  nonlocal operators, as \cite[Theorem~5.4, p.~133]{Boyarchenko_Levendorskii_2002b} proves.

\subsection{Structure of the paper}
\label{sec:Structure}
We prove the main results stated in the introduction in \S~\ref{sec:Stationary} and \S~\ref{sec:Evolution}, respectively. In addition to these results, we first prove in the stationary case a Dynamic Programming Principle and a Comparison Principle in Lemma~\ref{lem:DPP_stat} and Theorem~\ref{thm:Comparison_princ_stat}, respectively. The Dynamic Programming Principle is used in the proof of Theorem~\ref{thm:Existence_stat} where we establish the existence of viscosity solutions, while the Comparison Principle is used in the proof of Theorem~\ref{thm:Uniqueness_stat} to establish the uniqueness of solutions to the stationary obstacle problem \eqref{eq:Stationary_obs_problem}. Analogous results are obtained for the evolution case in Lemma~\ref{lem:DPP_evol} and Theorem~\ref{thm:Comparison_princ_evol}, respectively. In \S~\ref{sec:Notation} we describe the notations and conventions we use throughout the paper.

\subsection{Notation}
\label{sec:Notation}
Let $U\subseteq \RR^n$ be a set. We denote by $C(\bar U)$ the space of continuous functions $u:\bar U\rightarrow\RR^n$, such that
\begin{equation*}
\|u\|_{C(\bar U)} :=\sup_{x\in\bar U} |u(x)| < \infty.
\end{equation*}
We let $C^{\infty}_c(\RR^n)$ be the space of smooth functions defined on $\RR^n$ with bounded derivatives of all orders.
For all $\alpha\in (0,1]$, a function $u:\bar U\rightarrow\RR^n$ belongs to $C^{0,\alpha}(\bar U)$ if 
\begin{align*}
\|u\|_{C^{0,\alpha}(\bar U)} := \|u\|_{C(\bar U)} + [u]_{C^{0,\alpha}(U)}<\infty,
\end{align*}
where, as usual, we define
\begin{equation*}
[u]_{C^{0,\alpha}(U)}:= \sup_{x_1,x_2\in\bar U, x_1\neq x_2} \frac{|u(x_1)-u(x_2)|}{|x_1-x_2|^{\alpha}}.
\end{equation*}
When $\alpha\in (0,1)$, we denote for brevity $C^{\alpha}(\barU) := C^{0,\alpha}(\bar U)$. For all $T>0$, we denote by $C^{\frac{1}{2}}_tC^{0,1}_x([0,T]\times\RR^n)$ the space of functions $u:[0,T]\times\RR^n\rightarrow\RR$ such that
\begin{equation*}
\|u\|_{C^{\frac{1}{2}}_tC^{0,1}_x([0,T]\times\RR^n)}:= \|u\|_{C([0,T]\times\RR^n)}
+
\sup_{\stackrel{t_1,t_2\in [0,T], t_1\neq t_2}{x_1,x_2\in\RR^n, x_1\neq x_2}} 
\frac{|u(t_1,x_1)-u(t_2,x_2)|}{|t_1-t_2|^{\frac{1}{2}}+|x_1-x_2|} <\infty,
\end{equation*}
and we let $C^1_tC^2_x([0,T]\times\RR^n)$ denote the space of functions $u:[0,T]\times\RR^n\rightarrow\RR$ such that the first order derivative in the time variable and the second order derivatives in the spatial variables are continuous and bounded.

Given a set $U\subseteq \RR^n$, we denote by $U^c$ its complement. For all $a,b\in\RR$, we denote
$$
a\wedge b:=\min\{a,b\}
\quad\hbox{ and }\quad
a\vee b:= \max\{a,b\}.
$$
For all $r>0$ and $x\in\RR^n$, we denote by $B_r(x)$ the Euclidean ball of radius $r$ centered at $x$.

\section{Stationary obstacle problem}
\label{sec:Stationary}
In this section, we give the proofs of Proposition~\ref{prop:Regularity_stat}, and Theorems~\ref{thm:Existence_stat} and \ref{thm:Uniqueness_stat}. In addition, we prove a Dynamical Programming Principle in Lemma~\ref{lem:DPP_stat} and a comparison principle in Theorem~\ref{thm:Comparison_princ_stat}. We begin with:

\begin{proof}[Proof of Proposition~\ref{prop:Regularity_stat}]
We denote by $\{X^x(t)\}_{t\geq 0}$ the unique strong solution to the stochastic equation \eqref{eq:Process} with initial condition $X(0)=x\in\RR^n$. Because the functions $\varphi$ and $f$ are bounded and the zeroth order term $c$ satisfies property \eqref{eq:c_lower_bound}, it is clear that the value function $v$ defined in \eqref{eq:Value_function_stat} is bounded. To prove the H\"older continuity of $v$, we use the fact that  
\begin{equation}
\label{eq:Ineq_difference_stat}
|v(x_1)-v(x_2)| \leq \sup_{\tau\in\cT} |v(x_1;\tau) - v(x_2;\tau)|,\quad\forall\, x_1, x_2\in\RR^n,
\end{equation}
and we can assume without loss of generality that $|x_1-x_2|<1$ since $v$ is bounded. Let $T>0$ be a constant. Using definition \eqref{eq:Value_function_stat_aux} of the function $v(x;\tau)$ and condition \eqref{eq:c_lower_bound}, we see that
\begin{align*}
&|v(x_1;\tau) - v(x_2;\tau)|\\ 
&\qquad\leq
\EE \left[e^{-\int_0^{\tau} c(X^{x_1}(s))\, ds} \left|\varphi(X^{x_1}(\tau))-\varphi(X^{x_2}(\tau))\right|
\mathbf{1}_{\{\tau\leq T\}}\right]\\
&\qquad\quad
+ \EE \left[\left|e^{-\int_0^{\tau} c(X^{x_1}(s))\, ds} - e^{-\int_0^{\tau} c(X^{x_2}(s))\, ds}\right| 
|\varphi(X^{x_2}(\tau))|\mathbf{1}_{\{\tau\leq T\}}\right]\\
&\qquad\quad
+\EE \left[\int_0^{\tau\wedge T} e^{-\int_0^t c(X^{x_1}(s))\, ds} \left|f(X^{x_1}(t))-f(X^{x_2}(t)\right|\,dt\right]\\
&\qquad\quad
+ \EE \left[\int_0^{\tau\wedge T}\left|e^{-\int_0^t c(X^{x_1}(s))\, ds} - e^{-\int_0^t c(X^{x_2}(s))\, ds}\right| 
|f(X^{x_2}(t))|\,dt\right]\\
&\qquad\quad
+2\left(\|\varphi\|_{C(\RR^n)} + \|f\|_{C(\RR^n)}\right) e^{-c_0T}.
\end{align*}
Property \eqref{eq:c_lower_bound} and the fact that the functions $c$, $f$, and $\varphi$ are Lipschitz continuous give us that there is a positive constant, $C=C(c_0,[c]_{C^{0,1}(\RR^n)}, \|\varphi\|_{C^{0,1}(\RR^n)},\|f\|_{C^{0,1}(\RR^n)})$, such that
\begin{equation}
\label{eq:Ineq_tau_stat}
\begin{aligned}
&|v(x_1;\tau) - v(x_2;\tau)| \\
&\qquad\leq 
C\EE \left[e^{-c_0(\tau\wedge T)}|X^{x_1}(\tau\wedge T) - X^{x_2}(\tau\wedge T)|\right]\\ 
&\qquad\quad
+ C\EE \left[\int_0^{\tau\wedge T} e^{-c_0t}|X^{x_1}(t) - X^{x_2}(t)|\,dt\right]\\
&\qquad\quad
+C\EE \left[\int_0^{\tau\wedge T} e^{-c_0t} \int_0^t |X^{x_1}(s) - X^{x_2}(s)|\, ds\,dt\right]
+C e^{-c_0 T}.
\end{aligned}
\end{equation}
Using assumption \eqref{eq:Jump_size} in the stochastic equation \eqref{eq:Process}, it follows that
$$
X^{x_i}(t) = x_i + \int_0^t b(X^{x_i}(s))\,ds + \int_0^t\int_{\RR^n\setminus\{O\}} y\,d\widetilde N(ds,dy),
\quad\hbox{ for } i = 1,2,\quad\forall\, t>0,
$$
which gives us that
$$
X^{x_1}(t) - X^{x_2}(t) = x_1-x_2 + \int_0^t \left(b(X^{x_1}(s-)) - b(X^{x_2}(s-))\right)\,ds,\quad\forall\, t>0,
$$
and, using the fact that the drift coefficients $b(x)$ are Lipschitz continuous functions, we obtain
\begin{align*}
|X^{x_1}(t) - X^{x_2}(t)| \leq |x_1-x_2| + [b]_{C^{0,1}(\RR^n)} \int_0^t |X^{x_1}(s-) - X^{x_2}(s-)|\,ds,\quad\forall\, t>0.
\end{align*}
Gronwall's inequality now gives us that
\begin{align*}
|X^{x_1}(t) - X^{x_2}(t)| \leq |x_1-x_2| e^{\beta t},\quad\forall\, t>0,
\end{align*}
where we denote for brevity $\beta:=[b]_{C^{0,1}(\RR^n)}$. The preceding inequality together with \eqref{eq:Ineq_tau_stat} imply
\begin{equation}
\label{eq:Diff_x_better_stat}
|v(x_1;\tau)-v(x_2;\tau)| \leq C\left(|x_1-x_2|(e^{(\beta - c_0) T} + 1) +e^{-c_0T}\right),\quad\forall\,\tau\in\cT.
\end{equation} 
Letting $\gamma:=c_0/\beta$ and choosing $T>0$ large enough such that
\begin{equation}
\label{eq:Choice_T}
e^{-c_0T} = |x_1-x_2|^{\gamma},
\end{equation}
we have that
\begin{equation}
\label{eq:Ineq_diff_x}
|x_1-x_2|e^{(\beta - c_0) T} = |x_1-x_2|^{1+\gamma-\gamma\beta/c_0} = |x_1-x_2|^{\gamma}.
\end{equation}
Letting now $\alpha:=1\wedge\gamma$, it follows from estimates \eqref{eq:Diff_x_better_stat}, \eqref{eq:Choice_T}, and \eqref{eq:Ineq_diff_x} that 
\begin{equation}
\label{eq:Ineq_difference_stat_est}
|v(x_1;\tau)-v(x_2;\tau)| \leq C|x_1-x_2|^{\alpha},\quad\forall\, x_1,x_2\in\RR^n,\quad\forall\,\tau\in\cT.
\end{equation}
Thus, using identity \eqref{eq:Ineq_difference_stat} we obtain that the value function $v$ belongs to $C^{\alpha}(\RR^n)$. When inequality \eqref{eq:Cond_for_Lipschitz_stat} holds, the fact that the value function $v$ belongs to $C^{0,1}(\RR^n)$ is an immediate consequence of the fact that $\gamma\geq 1$, and so $\alpha = 1$. This completes the proof.
\end{proof}

For all $r>0$ and $x\in\RR^n$, we let
\begin{equation}
\label{eq:tau_r}
\tau_r := \inf\{t \geq 0: X^x(t)\notin B_r(x)\},
\end{equation}
where $\{X^x(t)\}_{t\geq 0}$ is the unique
solution to equation \eqref{eq:Process} with initial condition
$X^x(0)=x$ and $B_r(x)$ is the Euclidean ball of radius $r$ centered at $x$. We prove Theorem~\ref{thm:Existence_stat} with the aid of the following Dynamic Programming Principle.

\begin{lem}[Dynamic Programming Principle]
\label{lem:DPP_stat}
Suppose that the hypotheses of Proposition~\ref{prop:Regularity_stat} hold. Then the value function $v(x)$ defined in \eqref{eq:Value_function_stat} satisfies:
\begin{equation}
\label{eq:DPP_stat}
v(x) = \sup\{v(x;r,\tau):\,\tau \leq \tau_r\},\quad\forall\, r>0, 
\end{equation}
where we define
\begin{equation}
\label{eq:DPP_stat_aux}
\begin{aligned}
v(x;r,\tau) &:=\EE\left[
e^{-\int_0^{\tau}c(X^x(s))\,ds} \left(\varphi(X^x(\tau))\mathbf{1}_{\{\tau<\tau_r\}}
+ v(X^x(\tau))\mathbf{1}_{\{\tau = \tau_r\}}\right)\right]\\
&\quad + \EE\left[\int_0^{\tau\wedge\tau_r} e^{-\int_0^t c(X^x(s))\, ds} f(X^x(t))\, dt
\right].
\end{aligned}
\end{equation}
\end{lem}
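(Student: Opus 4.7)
The plan is to prove the Dynamic Programming Principle by establishing both inequalities $v(x) \geq \sup\{v(x;r,\tau):\,\tau\leq\tau_r\}$ and $v(x) \leq \sup\{v(x;r,\tau):\,\tau\leq\tau_r\}$ separately. Throughout, I would rely on the strong Markov property of $\{X^x(t)\}_{t\geq 0}$ at the stopping time $\tau_r$ (which is a hitting time of a closed set for an RCLL strong Markov process), the boundedness and continuity of $v$ from Proposition~\ref{prop:Regularity_stat}, and the lower bound \eqref{eq:c_lower_bound} on $c$.

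For the easier direction, $v(x) \leq \sup\{v(x;r,\tau):\,\tau\leq\tau_r\}$, I would take any $\sigma\in\cT$ and define $\tau := \sigma\wedge\tau_r$, which is a stopping time satisfying $\tau\leq\tau_r$. Splitting the integrals defining $v(x;\sigma)$ at $\tau_r$, the contribution on $\{\sigma<\tau_r\}$ matches exactly the $\mathbf{1}_{\{\tau<\tau_r\}}$-part of $v(x;r,\tau)$ because $\sigma=\tau$ there. On $\{\sigma\geq\tau_r\}$ (so $\tau=\tau_r$), I would apply the strong Markov property by conditioning on $\cF_{\tau_r}$: under this conditioning the post-shift process $\{X^x(\tau_r+t)\}_{t\geq 0}$ is equal in law to $\{X^y(t)\}_{t\geq 0}|_{y=X^x(\tau_r)}$ and the post-$\tau_r$ stopping rule $\sigma-\tau_r$ becomes a stopping time for the shifted filtration, so the post-$\tau_r$ contribution equals $v(X^x(\tau_r);\sigma-\tau_r)\leq v(X^x(\tau_r))$ by definition of the supremum in \eqref{eq:Value_function_stat}. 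Assembling the pieces gives $v(x;\sigma)\leq v(x;r,\tau)$, and taking the supremum over $\sigma\in\cT$ yields the inequality.

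For the opposite direction, $v(x)\geq\sup\{v(x;r,\tau):\,\tau\leq\tau_r\}$, I would fix $\tau\leq\tau_r$ and an error $\varepsilon>0$, and construct a single stopping time $\sigma\in\cT$ with $v(x;\sigma) \geq v(x;r,\tau)-C\varepsilon$. Exploiting the continuity of $v$, I would choose a countable Borel partition $\{B_k\}_k$ of $\RR^n$ with diameters small enough that $|v(y)-v(y_k)|<\varepsilon$ on $B_k$ for fixed representatives $y_k\in B_k$, and for each $k$ select $\sigma_k\in\cT$ with $v(y_k;\sigma_k)\geq v(y_k)-\varepsilon$. Setting
\[
\sigma := \tau\,\mathbf{1}_{\{\tau<\tau_r\}} + (\tau_r+\tilde\sigma)\,\mathbf{1}_{\{\tau=\tau_r\}},
\]
where $\tilde\sigma$ is the stopping time (for the shifted post-$\tau_r$ filtration) defined to equal $\sigma_k$ on $\{X^x(\tau_r)\in B_k\}$, produces an $\cF_t$-stopping time. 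Strong Markov applied at $\tau_r$ converts the post-$\tau_r$ contribution of $v(x;\sigma)$ into $\EE[e^{-\int_0^{\tau_r}c(X^x(s))ds}v(X^x(\tau_r);\tilde\sigma)]$, which by the near-optimality of the $\sigma_k$ and the continuity estimate on each $B_k$ differs from $\EE[e^{-\int_0^{\tau_r}c\,ds}v(X^x(\tau_r))]$ by at most $C\varepsilon$ (with $C$ depending on $\|\varphi\|_\infty$, $\|f\|_\infty$, and $c_0$), while the pre-$\tau_r$ contribution matches the $\mathbf{1}_{\{\tau<\tau_r\}}$-part of $v(x;r,\tau)$. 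Letting $\varepsilon\to 0$ and then taking the supremum over $\tau\leq\tau_r$ completes the proof.

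The main obstacle is the measurable pasting in the $\geq$ direction: one must splice together countably many independently chosen $\varepsilon$-optimal stopping times into a single $\{\cF_t\}$-stopping time defined after $\tau_r$ in a way that depends measurably on $X^x(\tau_r)$. Here the continuity of $v$ from Proposition~\ref{prop:Regularity_stat} is essential—it lets me replace an abstract measurable-selection argument by a simple partition-of-the-target-space construction. The remaining technicalities (checking that $\sigma$ is a stopping time for $\{\cF_t\}$, and that integrability is preserved when splitting at $\tau_r$) are routine given the RCLL paths and the uniform decay provided by $c\geq c_0>0$.
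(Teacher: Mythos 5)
Your proposal is correct and follows essentially the same route as the paper's proof: the easy direction by conditioning on $\cF_{\tau_r}$ and applying the strong Markov property together with $v(X^x(\tau_r);\cdot)\leq v(X^x(\tau_r))$, and the hard direction by partitioning $\RR^n$ into small Borel sets, pasting $\eps$-optimal stopping times after $\tau_r$, and controlling the error via the uniform-in-$\tau$ continuity estimate for $v(\cdot;\tau)$ from the proof of Proposition~\ref{prop:Regularity_stat}. The technicalities you flag as routine (that the spliced random time is an $\{\cF_t\}_{t\geq0}$-stopping time, and the existence of regular conditional probabilities) are exactly the two claims the paper proves in detail.
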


\begin{proof}
We denote by $w(x)$ the right hand-side of identity \eqref{eq:DPP_stat}, and we divide the proof into two steps.

\setcounter{step}{0}
\begin{step}[Proof of inequality $v(x)\leq w(x)$]
\label{step:Ineq_v_less_w} 
Let $\tau\in\cT$. From definition \eqref{eq:Value_function_stat_aux} of the function $v(x;\tau)$, conditioning on the $\sigma$-algebra $\cF_{\tau_r}$, we have that
\begin{equation}
\label{eq:Ineq_v_tau}
\begin{aligned}
&v(x;\tau) 
= 
\EE \left[e^{-\int_0^{\tau} c(X^x(s))\, ds} \varphi(X^x(\tau))\mathbf{1}_{\{\tau<\tau_r\}} 
+
\int_0^{\tau\wedge\tau_r} e^{-\int_0^ t c(X^x(s))\, ds} f(X^x(t))\, dt\right]\\
&\qquad+
\EE \left[\mathbf{1}_{\{\tau\geq\tau_r\}}e^{-\int_0^{\tau_r} c(X^x(s))\, ds}\right.\\
&\qquad\quad\times\left.
\EE \left[ e^{-\int_{\tau_r}^{\tau} c(X^x(s))\, ds} \varphi(X^x(\tau))
+ \int_{\tau_r}^{\tau} e^{-\int_{\tau_r}^t c(X^x(s))\, ds} f(X^x(t))\, dt \Big{|} \cF_{\tau_r}\right] \right].
\end{aligned}
\end{equation}
Let $\theta :=(\tau-\tau_r)\vee 0$. We next prove that the last term in the preceding identity can be written in the form 
\begin{equation}
\label{eq:Last_term}
\begin{aligned}
v(X^x(\tau_r);\theta) 
&= 
\EE \left[ e^{-\int_{\tau_r}^{\tau} c(X^x(s))\, ds} \varphi(X^x(\tau))
+ \int_{\tau_r}^{\tau} e^{-\int_{\tau_r}^t c(X^x(s))\, ds} f(X^x(t))\, dt \Big{|} \cF_{\tau_r}\right],\\
&\qquad\PP\hbox{-a.s.\ on }\{\tau \geq \tau_r\}. 
\end{aligned}
\end{equation}
For this purpose we need to show that there is a regular conditional probability distribution of $\PP$ given $\cF_{\tau_r}$, and that $\theta$ is a $\{\cF_{\tau_r+t}\}_{t\geq 0}$-stopping time.

To construct regular conditional probability distributions, we can assume without loss of generality that the sample space $\Omega$ of the Poisson random measure $N(dt, dy)$ with L\'evy measure $\nu(dy)$ is the space of functions $\omega:[0,\infty)\rightarrow\RR^n$ that are right-continuous and have left-limits (RCLL). We endow the space $\Omega$ with the Borel $\sigma$-algebra generated by the Skorohod topology, \cite[Chapter~3.5]{Ethier_Kurtz}, which we denote by $\cG$. For all $t\geq 0$, we let $\cG_t$ denote the $\sigma$-algebra generated by the Skorohod topology on $\Omega_t:=\{\omega:\,\omega:[0,t]\rightarrow\RR^n \hbox{ RCLL}\}$. Letting $\cG'$($\cG'_t$) be the $\sigma$-algebra generated by $\cG$ ($\cG_t$) and the $\PP$-null sets of $\cG$, and setting $\cF:=\cG'$ and $\cF_t = \cap_{s>t}\cG_s'$, we obtain that the filtered probability space $(\Omega, \cF, \{\cF_t\}_{t\geq 0}, \PP)$ satisfies the usual hypotheses in \cite[\S~I.1]{Protter}. Moreover, applying \cite[Theorem~3.5.6]{Ethier_Kurtz} we obtain that $\Omega$ endowed with the Skorohod topology is a complete separable metric space, and so \cite[Theorems~1.1.6 and 1.1.8]{Stroock_Varadhan} give us that there is a regular conditional probability distribution of $\PP$ given $\cF_{\tau_r}$, which we denote by $\PP_{\tau_r}$. To conclude the proof of Step~\ref{step:Ineq_v_less_w}, we next prove

\begin{locclaim}
\label{claim:theta}
The random time $\theta:=(\tau-\tau_r)\vee 0$ is a $\{\cF_{\tau_r+t}\}_{t\geq 0}$-stopping time.
\end{locclaim}

\begin{proof}[Proof of Claim~\ref{claim:theta}] 
We will show that for all $t\geq 0$ we have that $\{\theta \leq t\} \in \cF_{\tau_r+t}$, which is equivalent to proving that for all $t<s$ we have that the event $S:=\{\tau\leq\tau_r + t\} \cap \{\tau_r+t \leq s\}$ belongs to $\cF_s$. Because the filtration $\{\cF_t\}_{t\geq 0}$ is right-continuous, the preceding inclusion can be replaced with the condition 
\begin{equation}
\label{eq:Inclusion_s_t}
S:=\{\tau < \tau_r + t\} \cap \{\tau_r+t < s\} \in \cF_s,\quad\forall\, 0\leq t<s.
\end{equation}
The event $S$ can be written in the form 
$$
S = \cup_{n\in\NN}\cup_{q\in [0,s-t)\cap\QQ}\left(\{q-1/n<\tau_r<q\} \cap \{\tau<q-1/n+t\}\right),
$$
which clearly implies that they belong to the $\sigma$-algebra $\cF_s$. Thus property \eqref{eq:Inclusion_s_t} is satisfied, which concludes the proof of Claim~\ref{claim:theta}.
\end{proof}

Applying Claim~\ref{claim:theta} and the strong Markov property of the process $\{X^x(t)\}_{t\geq 0}$, we obtain that
\begin{align*}
&\EE \left[ e^{-\int_{\tau_r}^{\tau} c(X^x(s))\, ds} \varphi(X^x(\tau))
+ \int_{\tau_r}^{\tau} e^{-\int_{\tau_r}^t c(X^x(s))\, ds} f(X^x(t))\, dt \Big{|} \cF_{\tau_r}\right]\\
&\qquad =
\EE_{\PP_{\tau_r}} 
\left[ e^{-\int_0^{\theta} c(X^x(s))\, ds} \varphi(X^x(\theta))+ \int_0^{\theta} e^{-\int_0^t c(X^x(s))\, ds} f(X^x(t))\, dt\right],
\quad \PP\hbox{-a.s.\ on }\{\tau \geq \tau_r\},
\end{align*}
which together with definition \eqref{eq:Value_function_stat_aux} give us that identity \eqref{eq:Last_term} holds.
Identities \eqref{eq:Ineq_v_tau} and \eqref{eq:Last_term} imply that
\begin{align*}
v(x;\tau) 
&\leq 
\EE \left[e^{-\int_0^{\tau} c(X^x(s))\, ds} \varphi(X^x(\tau))\mathbf{1}_{\{\tau<\tau_r\}}
+ \int_0^{\tau\wedge\tau_r} e^{-\int_0^ t c(X^x(s))\, ds} f(X^x(t))\, dt\right]\\
&\quad+
\EE \left[\mathbf{1}_{\{\tau\geq\tau_r\}}e^{-\int_0^{\tau_r} c(X^x(s))\, ds} v(X^x(\tau_r)) \right]
\quad\hbox{(because $v(X^x(\tau_r;\theta)) \leq v(X^x(\tau_r))$ by \eqref{eq:Value_function_stat})}\\
&= v(x;r,\tau\wedge\tau_r)\quad\hbox{ (by \eqref{eq:DPP_stat_aux}).}
\end{align*}
Inequality $v(x)\leq w(x)$ now follows from the preceding expression by taking the supremum over all stopping times $\tau\in\cT$. 
\end{step}

\begin{step}[Proof of inequality $w(x)\leq v(x)$]
\label{step:Ineq_w_less_v} 
We fix $\eps\in (0,1)$, and we choose a stopping time $\tau=\tau(\eps)\leq\tau_r$ with the property that 
\begin{equation}
\label{eq:w_almost_less_v}
w(x)<v(x;r,\tau) + \eps.
\end{equation}
We choose a countable family of Borel measurable sets, $\{A_k\}_{k\in\NN}$, that partitions $\RR^n$, and a sequence of points, $\{x_k\}_{k\in\NN}$, such that $x_k\in A_k \subseteq B_{\eps}(x_k)$. For all $k\in\NN$, we choose a stopping time, $\theta_k\in\cT$, such that 
\begin{equation}
\label{eq:v_x_k}
v(x_k) < v(x_k;\theta_k) +\eps.
\end{equation}
Let $\alpha\in (0,1)$ be the H\"older exponent appearing in the statement of Proposition~\ref{prop:Regularity_stat}. For all $\omega\in\{\tau = \tau_r \hbox{ and } X^x(\tau_r) \in A_k\}$, we have that
\begin{align*}
v(X^x(\tau_r)) &\leq v(x_k) + C \eps^{\alpha}
\quad\hbox{ (by \eqref{eq:Ineq_difference_stat_est} and the fact that $A_k\subseteq B_{\eps}(x_k)$)}\\
&\leq v(x_k;\theta_k) + \eps + C\eps^{\alpha}
\quad\hbox{ (by our choice of $\theta_k$ in \eqref{eq:v_x_k})}\\
&\leq v(X^x(\tau_r);\theta_k) + (v(x_k;\theta_k) - v(X^x(\tau_r);\theta_k)) + \eps + C\eps^{\alpha}\\
&\leq v(X^x(\tau_r);\theta_k) + \eps + 2C\eps^{\alpha}
\quad\hbox{ (by \eqref{eq:Ineq_difference_stat_est} and the fact that $A_k\subseteq B_{\eps}(x_k)$).}
\end{align*}
The preceding construction together with definition \eqref{eq:DPP_stat_aux} of $v(x;r,\tau)$ and inequality \eqref{eq:w_almost_less_v} yield
\begin{equation}
\label{eq:w_v_bar_tau}
\begin{aligned}
w(x) &< 
\EE\left[e^{-\int_0^{\tau}c(X^x(s))\,ds} \varphi(X^x(\tau))\mathbf{1}_{\{\tau<\tau_r\}}
+ \int_0^{\tau\wedge\tau_r} e^{-\int_0^t c(X^x(s))\, ds} f(X^x(t))\, dt \right] \\
&\quad
+\EE\left[ e^{-\int_0^{\tau}c(X^x(s))\,ds} 
\sum_{k\in\NN} v(X^x(\tau_r);\theta_k) \mathbf{1}_{\{\tau = \tau_r\}} \mathbf{1}_{\{X^x(\tau_r) \in A_k\}}\right] + C\eps^{\alpha}.
\end{aligned}
\end{equation}
We notice that, by letting $S_{\tau_r}(\omega)(t):=\omega(\tau_r+t)$ denote the shift operator, we have that

\begin{locclaim}
\label{claim:bar_tau}
The random time defined by
\begin{equation}
\label{eq:bar_tau}
\bar\tau:= \tau_r+\sum_{k\in\NN} \theta_k(S_{\tau_r}) \mathbf{1}_{\{\tau=\tau_r\}} \mathbf{1}_{\{X^x(\tau_r) \in A_k\}}
\end{equation}
is a $\{\cF_{t}\}_{t\geq 0}$-stopping time. 
\end{locclaim}

\begin{proof}[Proof of Claim~\ref{claim:bar_tau}]
Because $\tau_r$ is a $\{\cF_{t}\}_{t\geq 0}$-stopping time, the conclusion of Claim~\ref{claim:bar_tau} follows once we prove that the random time $\bar\theta := \bar\tau-\tau_r$ is a $\{\cF_{\tau_r+t}\}_{t\geq 0}$-stopping time. For this purpose, it is sufficient to prove that the set $E:=\{\bar\theta\leq t\}$ is $\cF_{\tau_r+t}$-measurable, for all $t\geq 0$. From the definitions of $\bar\theta$ and $\bar\tau$, using the fact that the family of sets $\{A_k\}_{k\in\NN}$ partitions $\RR^n$, we can write $E$ as a disjoint union of sets $\{E_k\}_{k\in\NN}$, where
$$
E_k:=\{X^x(\tau_r)\in A_k\} \cap \{\tau=\tau_r\}\cap \{\theta_k(S_{\tau_r}) \leq t\},\quad\forall\, k\in\NN.
$$
The proof is concluded once we prove that the set $E_k$ is $\cF_{\tau_r+t}$-measurable, for all $k\in\NN$. Using the fact that the shift operator $S_{\tau_r}:(\Omega,\cF_{\tau_r+t})\rightarrow (\Omega,\cF)$ is a measurable function, and $\theta_k$ is a $\{\cF_t\}_{t\geq 0}$-stopping time, we obtain that the event $\{\theta_k(S_{\tau_r})\leq t\}$ is $\cF_{\tau_r+t}$-measurable. Combining this property with the fact that $\{X^x(S_{\tau_r})\in A_k\}$ and $\{\tau=\tau_r\}$ are $\cF_{\tau_r}$-measurable sets, we deduce that $E_k$ is $\cF_{\tau_r+t}$-measurable. Hence, the set $E$ is $\cF_{\tau_r+t}$-measurable, for all $t\geq 0$, which completes the proof of Claim~\ref{claim:bar_tau}.
\end{proof}

Using definition \eqref{eq:Value_function_stat_aux}, we have that
\begin{align*}
&\sum_{k\in\NN} v(X^x(\tau_r);\theta_k) \mathbf{1}_{\{\tau = \tau_r\}} \mathbf{1}_{\{X^x(\tau_r) \in A_k\}}\\ 
&\qquad = \mathbf{1}_{\{\tau = \tau_r\}} 
\EE_{\PP_{\tau_r}} 
 \left[e^{-\int_{\tau_r}^{\bar\tau} c(X^x(s))\, ds} \varphi(X^x(\bar\tau))
+ \int_{\tau_r}^{\bar\tau} e^{-\int_0^ t c(X^x(s))\, ds} f(X^x(t))\, dt\right].
\end{align*}
Recalling that $\PP_{\tau_r}$ is a regular condition probability distribution given $\cF_{\tau_r}$, it follows from the preceding identity and \eqref{eq:w_v_bar_tau} that
\begin{align*}
w(x) &< 
\EE\left[e^{-\int_0^{\tau}c(X^x(s))\,ds} \varphi(X^x(\tau))\mathbf{1}_{\{\tau<\tau_r\}}
+ \int_0^{\tau\wedge\tau_r} e^{-\int_0^t c(X^x(s))\, ds} f(X^x(t))\, dt \right] \\
&\quad
+\EE\left[ e^{-\int_0^{\tau}c(X^x(s-))\,ds} \mathbf{1}_{\{\tau = \tau_r\}} \right.\\
&\qquad\quad\times\left.
\EE\left[e^{-\int_{\tau_r}^{\bar\tau} c(X^x(s))\, ds} \varphi(X^x(\bar\tau))
+ \int_{\tau_r}^{\bar\tau} e^{-\int_{\tau_r}^ t c(X^x(s))\, ds} f(X^x(t))\, dt \Big{|}\cF_{\tau_r}\right]
\right]\\
&\quad + C\eps^{\alpha}.
\end{align*}
Claim~\ref{claim:bar_tau} yields that $\bar\tau$ belongs to $\cT$, and using definition \eqref{eq:Value_function_stat_aux} we can write the sum of the first two terms on the right-hand side of the preceding inequality as $v(x;\bar\tau)$. Hence, we deduce that $w(x) < v(x;\bar\tau)+c\eps^{\alpha}$, from which it follows that $w(x) < v(x)+C\eps^{\alpha}$ by identity \eqref{eq:Value_function_stat}. We obtain the desired inequality $w(x)\leq v(x)$ by letting $\eps$ tend to zero. 
\end{step}

Steps~\ref{step:Ineq_v_less_w}  and \ref{step:Ineq_w_less_v} combined complete the proof.
\end{proof}

We next apply Lemma~\ref{lem:DPP_stat} to give the

\begin{proof}[Proof of Theorem~\ref{thm:Existence_stat}]
We divide the proof into two steps.

\setcounter{step}{0}
\begin{step}[Verification that $v$ is a subsolution]
\label{step:Subsol_stat}
To prove that $v$ is a subsolution to equation \eqref{eq:Stationary_obs_problem}, let $u\in C^2(\RR^n)$ be such that $u(x_0)=v(x_0)$ and $u(x)\geq v(x)$, for all $x\in\RR^n$. We need to show that
$$
-Lu(x_0)+c(x_0) u(x_0) \leq f(x_0)\quad\hbox{ or }\quad u(x_0) \leq \varphi(x_0).
$$
We assume without loss of generality that $u(x_0) > \varphi(x_0)$, otherwise the conclusion that $v$ is a subsolution is obtained. For all stopping times $\tau\in\cT$, It\^o's rule applied to $u$ and the unique strong solution, $\{X^{x_0}(t)\}_{t\geq 0}$, to equation \eqref{eq:Process} with initial condition $X^{x_0}(0)=x_0$, gives us 
\begin{align*}
&e^{-\int_0^{\tau} c(X^{x_0}(s))\,ds} u(X^{x_0}(\tau))\\
&\qquad\quad = u(x_0)+\int_0^{\tau} e^{-\int_0^t c(X^{x_0}(s))\,ds} (L-c(X^{x_0}(s))) u(X^{x_0}(t))\, dt + M(\tau),
\end{align*}
where we define the process
$$
M(t):=\int_0^t\int_{\RR^n\setminus\{O\}}e^{-\int_0^s c(X^{x_0}(r))\,dr}\left(u(X^{x_0}(s-)+F(X^{x_0}(s-),y)) - u(X^{x_0}(s-))\right)\, \widetilde N(ds,dy),
$$
for all $t\geq 0$. It follows by Proposition~\ref{prop:Regularity_stat}, condition \eqref{eq:Jump_mart_stat}, and \cite[Theorem~4.2.3]{Applebaum} that the process $\{M(t)\}_{t\geq 0}$ is a martingale, which gives us that
\begin{align*}
u(x_0) &= \EE\left[e^{-\int_0^{\tau} c(X^{x_0}(s))\,ds} u(X^{x_0}(\tau)) \right] \\
&\quad+ 
\EE\left[\int_0^{\tau} e^{-\int_0^t c(X^{x_0}(s))\,ds} (-L+c(X^{x_0}(t))) u(X^{x_0}(t))\, dt\right].
\end{align*}
Assume by contradiction that there are positive constants, $\eps$ and $r$, such that $(-L+c) u(x)\geq f(x)+\eps$, for all $x\in B_r(x_0)$. Using the fact that $v(x_0)=u(x_0)$, and replacing the stopping times $\tau$ in the preceding identity for $u(x_0)$ by $\tau\wedge\tau_r$, it follows that
\begin{align*}
v(x_0) &\geq \EE\left[e^{-\int_0^{\tau\wedge\tau_r} c(X^{x_0}(s))\,ds} u(X^{x_0}(\tau\wedge\tau_r)) \right]\notag\\
&\quad + \EE\left[\int_0^{\tau\wedge\tau_r} e^{-\int_0^t c(X^{x_0}(s))\,ds}\left(f(X^{x_0}(t))+\eps\right)\, dt\right].
\end{align*}
Letting $M:=\|c\|_{C(\RR^n)}$ and using the fact that
\begin{align*}
\EE\left[\int_0^{\tau\wedge\tau_r} e^{-\int_0^t c(X^{x_0}(s))\,ds} \, dt\right]
&\geq \EE\left[\int_0^{\tau\wedge\tau_r} e^{-Mt} \, dt\right]\\
& = \EE\left[\frac{1}{M}\left(1 - e^{-M\tau\wedge\tau_r}\right) \right]\\
&\geq \EE\left[\frac{1}{2M} \mathbf{1}_{\{e^{-M\tau\wedge\tau_r}\leq 1/2\}} 
+ \frac{1}{2}\tau\wedge\tau_r \mathbf{1}_{\{e^{-M\tau\wedge\tau_r} > 1/2\}}\right]\\
&\geq \frac{1}{2}\EE\left[ \tau\wedge\tau_r \wedge \frac{1}{M} \right],
\end{align*}
it follows that
\begin{equation}
\label{eq:Ineq_stat_v_eps}
\begin{aligned}
v(x_0) 
&\geq \EE\left[e^{-\int_0^{\tau\wedge\tau_r} c(X^{x_0}(s))\,ds} u(X^{x_0}(\tau\wedge\tau_r)) 
+\int_0^{\tau\wedge\tau_r} e^{-\int_0^t c(X^{x_0}(s))\,ds} f(X^{x_0}(t))\, dt\right]\\ 
&\quad +\frac{\eps}{2}\EE\left[ \tau\wedge\tau_r \wedge \frac{1}{M} \right].
\end{aligned}
\end{equation}
Recall that $u(x_0) > \varphi(x_0)$, $u(x_0)=v(x_0)$, and $u\geq v$ on $\RR^n$, and so we can assume without loss of generality that $r$ is chosen small enough so that $u(x)\geq v(x)>\varphi(x)$, for all $x\in B_r(x_0)$. This implies that
\begin{align*}
&\sup_{\tau\in\cT} \EE\left[e^{-\int_0^{\tau\wedge\tau_r} c(X^{x_0}(s))\,ds} u(X^{x_0}(\tau\wedge \tau_r)) 
+\int_0^{\tau\wedge\tau_r} e^{-\int_0^t c(X^{x_0}(s))\,ds} f(X^{x_0}(t))\, dt\right] \\
&\quad\geq\sup_{\tau\in\cT} \EE\left[e^{-\int_0^{\tau\wedge\tau_r} c(X^{x_0}(s))\,ds} 
\left(\varphi(X^{x_0}(\tau\wedge \tau_r))\mathbf{1}_{\{\tau<\tau_r\}} +  v(X^{x_0}(\tau\wedge \tau_r))\mathbf{1}_{\{\tau\geq\tau_r\}}\right)\right]\\
&\qquad+\EE\left[\int_0^{\tau\wedge\tau_r} e^{-\int_0^t c(X^{x_0}(s))\,ds} f(X^{x_0}(t))\, dt\right] \\
&\quad= v(x_0),
\end{align*}
where in the last line we applied Lemma~\ref{lem:DPP_stat}. Thus, for all $\delta>0$, we can choose a stopping time $\tau^{\delta}\in\cT$ with the property that
\begin{equation}
\label{eq:Subsol_stat_contrad_2}
\begin{aligned}
&\EE\left[e^{-\int_0^{\tau^{\delta}\wedge\tau_r} c(X^{x_0}(s))\,ds} \left(\varphi(X^{x_0}(\tau^{\delta}\wedge\tau_r)) 
\mathbf{1}_{\{\tau^{\delta}<\tau_r\}} +  v(X^{x_0}(\tau^{\delta}\wedge \tau_r))\mathbf{1}_{\{\tau^{\delta}\geq\tau_r\}}\right)\right]\\
&\quad+\EE\left[\int_0^{\tau^{\delta}\wedge\tau_r} e^{-\int_0^t c(X^{x_0}(s))\,ds} f(X^{x_0}(t))\, dt\right] > v(x_0) - \delta.
\end{aligned}
\end{equation}
Letting $\tau=\tau^{\delta}$ in inequality \eqref{eq:Ineq_stat_v_eps} and using \eqref{eq:Subsol_stat_contrad_2}, it follows that, for all $\delta>0$, we have the inequality
\begin{equation}
\label{eq:Subsol_stat_contrad}
v(x_0) \geq v(x_0)-\delta + \frac{\eps}{2}\EE\left[\tau\wedge\tau_r \wedge \frac{1}{M} \right].
\end{equation}
We next claim that
\begin{equation}
\label{eq:Subsol_stat_contrad_1}
\liminf_{\delta\rightarrow 0} \EE\left[\tau^{\delta}\wedge\tau_r\wedge \frac{1}{M}\right] > 0.
\end{equation}
When property \eqref{eq:Subsol_stat_contrad_1} does not hold, there is
a sequence $\{\delta_k\}_{k\in\NN}$ such that
$\{\EE\left[\tau^{\delta_k}\wedge\tau_r\wedge \frac{1}{M}\right]\}_{k\in\NN}$ converges to zero, and $\{\tau^{\delta_k}\wedge\tau_r\}_{k\in\NN}$ also converges to zero $\PP$-a.s.. This property and the Dominated Convergence Theorem us by inequality \eqref{eq:Subsol_stat_contrad_2} that 
$$
\varphi(x_0) \PP_{x_0}(\tau_r>0) + v(x_0)\PP_{x_0}(\tau_r=0) \geq v(x_0).
$$
This implies that $\varphi(x_0) \PP_{x_0}(\tau_r>0) \geq v(x_0) \PP_{x_0}(\tau_r>0)$, and using the fact that the event $\tau_r>0$ has positive probability, we obtain the inequality $\varphi(x_0) \geq v(x_0)$, which contradicts our assumption that $u(x_0)=v(x_0)>\varphi(x_0)$. Thus, property \eqref{eq:Subsol_stat_contrad_1} holds. Combining now \eqref{eq:Subsol_stat_contrad_1} with inequality \eqref{eq:Subsol_stat_contrad}, where we let $\delta$ tend to zero, we obtain the contradiction $v(x_0)>v(x_0)$. This implies that
$-Lu(x_0)+c(x_0) u(x_0) \leq f(x_0)$, which guarantees that
$$
\min\{-Lu(x_0)+c(x_0) u(x_0) - f(x_0), u(x_0)-\varphi(x_0)\}\leq 0,
$$ 
and so, indeed $v$ defined in \eqref{eq:Value_function_stat} is a viscosity subsolution to the obstacle problem \eqref{eq:Stationary_obs_problem}. This concludes the proof of Step~\ref{step:Subsol_stat}.
\end{step}

\begin{step}[Verification that $v$ is a supersolution]
\label{step:Supersol_stat}
Let $u\in C^2(\RR^n)$ be such that $u(x_0)=v(x_0)$ and $u(x)\leq v(x)$, for all $x\in\RR^n$. Our goal is to prove that
$$
\min\{-Lu(x_0) + c(x_0)u(x_0)-f(x_0), u(x_0)-\varphi(x_0)\} \geq 0,
$$
and so conclude that the value function $v$ is a supersolution to the obstacle problem \eqref{eq:Stationary_obs_problem}. Definition~\eqref{eq:Value_function_stat} implies that $v(x)\geq\varphi(x)$, for all $x\in\RR^n$, and in particular that $u(x_0)\geq\varphi(x_0)$. Thus, we need to show that 
\begin{equation}
\label{eq:Ineq_supersol_stat}
-Lu(x_0) + c(x_0)u(x_0)\geq f(x_0).
\end{equation}
Assuming by contradiction that the preceding inequality does not hold, there are positive constants, $\eps$ and $r$, such that
\begin{equation}
\label{eq:Ineq_supersol_stat_contrad}
-Lu(x) + c(x)u(x) \leq f(x)-\eps,\quad\,\forall\, x\in B_r(x_0).
\end{equation}
Using the fact that $u(x_0)=v(x_0)$, and applying It\^o's rule to $u$ and the unique strong solution, $\{X^{x_0}(t)\}_{t\geq 0}$, to equation \eqref{eq:Process} with initial condition $X^{x_0}(0)=x_0$, we obtain similarly to Step~\ref{step:Subsol_stat} that
\begin{equation*}
\begin{aligned}
v(x_0) &= \EE\left[e^{-\int_0^{\tau_r} c(X^{x_0}(s))\,ds} u(X^{x_0}(\tau_r))  + 
\int_0^{\tau_r} e^{-\int_0^t c(X^{x_0}(s))\,ds} (-L+c(X^{x_0}(t-))) u(X^{x_0}(t))\, dt\right]\notag\\
& \leq \EE\left[e^{-\int_0^{\tau_r} c(X^{x_0}(s))\,ds} v(X^{x_0}(\tau_r))  + 
\int_0^{\tau_r} e^{-\int_0^t c(X^{x_0}(s))\,ds} f(X^{x_0}(t))\, dt\right]\notag\\
&
\quad - \eps \EE\left[\int_0^{\tau_r} e^{-\int_0^t c(X^{x_0}(s))\,ds}\, dt\right],
\end{aligned}
\end{equation*}
where in the second inequality we used the fact that $u(x)\leq v(x)$, for all $x\in\RR^n$, together with assumption \eqref{eq:Ineq_supersol_stat_contrad}. Notice that the first term in the second inequality above is $v(x_0;r,\tau_r)$ by identity \eqref{eq:DPP_stat_aux}. Applying Lemma~\ref{lem:DPP_stat} we have that $v(x_0;r,\tau_r)\leq v(x_0)$, which gives us that
$$
v(x_0) \leq v(x_0) - \eps \EE_{x_0}\left[\int_0^{\tau_r} e^{-\int_0^t c(X(s))\,ds}\, dt\right].
$$	
Because $\PP_{x_0}(\tau_r>0)$ is positive, we obtain the contradiction $v(x_0)<v(x_0)$. This implies that inequality \eqref{eq:Ineq_supersol_stat} holds, which shows that $v$ is a supersolution to equation \eqref{eq:Stationary_obs_problem}.
\end{step}

Steps~\ref{step:Subsol_stat} and \ref{step:Supersol_stat} complete the proof that the value function defined in \eqref{eq:Value_function_stat} is a viscosity solution to the obstacle problem \eqref{eq:Stationary_obs_problem}.
\end{proof}

We prove Theorem~\ref{thm:Uniqueness_stat} with the aid of the following comparison principle:

\begin{thm}[Comparison principle]
\label{thm:Comparison_princ_stat}
Suppose that Assumption~\ref{assump:Coeff} holds, $c \in C(\RR^n)$ satisifes condition \eqref{eq:c_lower_bound}, $f\in C(\RR^n)$, and condition \eqref{eq:Interior_point} holds. If $u$ and $v$ are a viscosity subsolution and supersolution to the obstacle problem \eqref{eq:Stationary_obs_problem}, respectively, then $u\leq v$.
\end{thm}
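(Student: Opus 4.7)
The plan is the doubling-of-variables method of Crandall-Ishii-Lions \cite[\S~3]{Crandall_Ishii_Lions_1992}, adapted to nonlocal operators. Suppose for contradiction that $M_0 := \sup_{\RR^n}(u-v) > 0$, and assume $u,v$ are bounded (the natural setting in view of Proposition~\ref{prop:Regularity_stat}). To compactify the problem, introduce $\chi(x) := \sqrt{1+|x|^2} \in C^2(\RR^n)$, which has bounded derivatives and finite $L\chi$, and set
\begin{equation*}
\Phi_{\varepsilon,\eta}(x,y) := u(x) - v(y) - \frac{|x-y|^2}{2\varepsilon} - \eta\bigl(\chi(x)+\chi(y)\bigr),\qquad \varepsilon,\eta>0.
\end{equation*}
For $\eta$ sufficiently small, $\sup\Phi_{\varepsilon,\eta}\geq M_0/2$ is attained at some $(x_\varepsilon,y_\varepsilon)$ in a compact set (independent of $\varepsilon$), and the standard doubling estimates give $|x_\varepsilon-y_\varepsilon|\to 0$ and $|x_\varepsilon-y_\varepsilon|^2/\varepsilon\to 0$ as $\varepsilon\to 0$.

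Next I would apply the viscosity inequalities at $(x_\varepsilon, y_\varepsilon)$ with test functions $\psi_1(x) := \tfrac{|x-y_\varepsilon|^2}{2\varepsilon}+\eta\chi(x)$ (touching $u$ from above at $x_\varepsilon$, up to a constant) and $\psi_2(y) := -\tfrac{|x_\varepsilon-y|^2}{2\varepsilon}-\eta\chi(y)$ (touching $v$ from below at $y_\varepsilon$). If the obstacle branch is active for the subsolution, i.e.\ $u(x_\varepsilon) \leq \varphi(x_\varepsilon)$, then combined with $v(y_\varepsilon) \geq \varphi(y_\varepsilon)$ and continuity of $\varphi$ we obtain $u(x_\varepsilon)-v(y_\varepsilon)\to 0$, contradicting $\Phi_{\varepsilon,\eta}(x_\varepsilon,y_\varepsilon) \geq M_0/2$ for $\eta$ small. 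Otherwise both PDE branches are active; subtracting yields
\begin{equation*}
c(x_\varepsilon)u(x_\varepsilon)-c(y_\varepsilon)v(y_\varepsilon) \leq L\psi_1(x_\varepsilon)-L\psi_2(y_\varepsilon) + f(x_\varepsilon)-f(y_\varepsilon).
\end{equation*}
The left-hand side is $\geq c_0 M_0/2 - o(1) - O(\eta)$ by $c\geq c_0$ and continuity of $c$, and $f(x_\varepsilon)-f(y_\varepsilon)=o(1)$. The drift contribution to $L\psi_1 - L\psi_2$ is $(b(x_\varepsilon)-b(y_\varepsilon))\cdot\tfrac{x_\varepsilon-y_\varepsilon}{\varepsilon}+O(\eta)$, which by Lipschitzness of $b$ is $\leq [b]_{C^{0,1}}|x_\varepsilon-y_\varepsilon|^2/\varepsilon + O(\eta) = o(1)+O(\eta)$.

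The hard part is the nonlocal contribution to $L\psi_1(x_\varepsilon)-L\psi_2(y_\varepsilon)$: a direct Taylor expansion of $\psi_1,\psi_2$ produces a spurious term of size $\tfrac{1}{2\varepsilon}\int(|F(x_\varepsilon,y)|^2+|F(y_\varepsilon,y)|^2)\,\nu(dy)$ that blows up with $\varepsilon$, rather than the desired $\tfrac{1}{\varepsilon}\int|F(x_\varepsilon,y)-F(y_\varepsilon,y)|^2\nu(dy)$. The resolution, standard in the nonlocal viscosity theory, is to pass to an equivalent mixed formulation in which the far-field part of $L$ is evaluated on $u,v$ themselves rather than on $\psi_1,\psi_2$; combining the one-sided inequalities $u\leq\psi_1$, $v\geq\psi_2$ (with matching gradients at the contact points) with the global maximum property of $\Phi_{\varepsilon,\eta}$ applied at shifts $a=F(x_\varepsilon,y)$, $b=F(y_\varepsilon,y)$ gives the pointwise bound
\begin{equation*}
\bigl[u(x_\varepsilon+F(x_\varepsilon,y))-u(x_\varepsilon)\bigr]-\bigl[v(y_\varepsilon+F(y_\varepsilon,y))-v(y_\varepsilon)\bigr] \leq \frac{(x_\varepsilon-y_\varepsilon)\cdot(F(x_\varepsilon,y)-F(y_\varepsilon,y))}{\varepsilon}+\frac{|F(x_\varepsilon,y)-F(y_\varepsilon,y)|^2}{2\varepsilon}+O(\eta).
\end{equation*}
Integrating this (compensated appropriately) against $\nu$ and invoking the $L^2(\nu)$-Lipschitz bound \eqref{eq:F_Lipschitz_squared}, the nonlocal contribution is controlled by $CK|x_\varepsilon-y_\varepsilon|^2/\varepsilon = o(1)$, while condition \eqref{eq:Interior_point} ensures integrability of the second-order remainders near $y=0$. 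Letting $\varepsilon\to 0$ and then $\eta\to 0$ yields the contradiction $c_0 M_0/2 \leq 0$, hence $u\leq v$.
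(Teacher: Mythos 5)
Your proposal is correct and takes essentially the same route as the paper's proof: doubling of variables with a penalization at infinity, the dichotomy on the obstacle branch versus the PDE branch, the drift estimate from the Lipschitz continuity of $b$, and—crucially—the control of the nonlocal term by evaluating the far-field part of $L$ on $u,v$ themselves through the maximum-shift inequality combined with \eqref{eq:F_Lipschitz_squared} and \eqref{eq:F_Levy_cond}, which is precisely what the paper's auxiliary functions $\bar u,\bar v$ (smooth paraboloid near the contact point, $u$ or $v$ outside) implement. The only small imprecision is the attribution of roles near $y=O$: integrability of the small-jump remainder comes from \eqref{eq:F_Levy_cond}, whereas \eqref{eq:Interior_point} is what allows one to pick the splitting radius so that small jumps land in the region where the smooth test function is used, the near-field contribution then being made negligible by shrinking that radius—this does not affect the validity of your argument.
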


\begin{proof}
As usual in comparison arguments for viscosity solutions on unbounded domains, \cite[Proof of Theorem~5.1]{Crandall_Ishii_Lions_1992}, we let $\alpha$ and $\eps$ be positive constants and we define:
\begin{equation}
\label{eq:M_a_e_stat}
M_{\alpha,\eps} := \sup_{x,y\in\RR^n} \left\{u(x)-v(y) - \frac{\alpha}{2}|x-y|^2-\eps(|x|^2+|y|^2)\right\}.
\end{equation}
Letting $x_{\alpha,\eps}$ and $y_{\alpha,\eps}$ be points where the supremum of $M_{\alpha,\eps}$ is attained, it follows by \cite[Lemma~3.1]{Crandall_Ishii_Lions_1992} that
\begin{equation}
\label{eq:Conv_diff_x_y_stat}
\alpha|x_{\alpha,\eps} - y_{\alpha,\eps}|^2\rightarrow 0,\quad\hbox{ as } \alpha\rightarrow\infty,
\end{equation}
and so, we can assume without loss of generality that
\begin{equation}
\label{eq:Conv_x_y_stat}
x_{\alpha,\eps}, y_{\alpha,\eps} \rightarrow x_{\eps},\quad\hbox{ as }\alpha\rightarrow\infty,
\end{equation}
and applying again \cite[Lemma~3.1]{Crandall_Ishii_Lions_1992}, we have that
\begin{align}
\label{eq:No_a_stat}
u(x_{\eps}) - v(x_{\eps}) -2\eps|x_{\eps}|^2 &= \sup_{x\in\RR^n} \{u(x)-v(x) - 2\eps|x|^2\}.
\end{align}
For all $\delta>0$, condition \eqref{eq:Interior_point} ensures that there is $r=r(\alpha,\delta,\eps)\in (0,\delta)$ such that 
$$
B_r\subseteq 
\{y\in\RR^n:|F(x_{\alpha,\eps}, y)| <\delta\,\hbox{ and }\,|F(y_{\alpha,\eps}, y)| <\delta\}.
$$  
We consider the the auxiliary function
\begin{equation}
\begin{aligned}
\hat u(x) &:= u(x_{\alpha,\eps}) + \frac{\alpha}{2}\left(|x-y_{\alpha,\eps}|^2-|x_{\alpha,\eps}-y_{\alpha,\eps}|^2\right)+\eps(|x|^2-|x_{\alpha,\eps}|^2),\notag\\
\label{eq:u_supersol_stat}
\bar u(x) &:= \left\{
\begin{array}{rl}
\hat u(x), &\quad \hbox{ if } x \in B_r(x_{\alpha,\eps}),\\
u(x), &\quad \hbox{ if } x \in (B_r(x_{\alpha,\eps}))^c,
\end{array} \right.
\end{aligned}
\end{equation}
for all $x\in\RR^n$. From definition \eqref{eq:M_a_e_stat} of $M_{\alpha,\eps}$ and from the choice of the points $x_{\alpha,\eps}$ and $y_{\alpha,\eps}$, we see that $u\leq \bar u$ on $\RR^n$ and $x_{\alpha,\eps}$ is a point at which $u-\bar u$ attains its maximum. Similarly, we define
\begin{equation}
\begin{aligned}
\hat v(y) &:= v(y_{\alpha,\eps}) + \frac{\alpha}{2}\left(|x_{\alpha,\eps}-y_{\alpha,\eps}|^2-|x_{\alpha,\eps}-y|^2\right)-\eps(|y|^2-|y_{\alpha,\eps}|^2),\notag\\
\label{eq:v_supersol_stat}
\bar v(y) &:= \left\{
\begin{array}{rl}
\hat v(y), &\quad \hbox{ if } y \in B_r(y_{\alpha,\eps}),\\
v(y), &\quad \hbox{ if } y \in (B_r(y_{\alpha,\eps}))^c,
\end{array} \right.
\end{aligned}
\end{equation}
for all $y\in\RR^n$, and we see that $v\geq \bar v$ on $\RR^n$ and $y_{\alpha,\eps}$ is a point at which $v-\bar v$ attains its minimum.
By construction, the auxiliary functions $\bar u$ and $\bar v$ are $C^2$ in a neighborhood of $x_{\alpha,\eps}$ and $y_{\alpha,\eps}$, respectively, and are continuous functions on $\RR^n\setminus\partial B_r(x_{\alpha,\eps})$ and $\RR^n\setminus\partial B_r(y_{\alpha,\eps})$, respectively. A mollification argument applied to $\bar u$ and $\bar v$ shows that even though $\bar u$ and $\bar v$ are not $C^2$ functions on $\RR^n$, we can still apply Definition~\ref{defn:Viscosity_sol_stat} to obtain that
\begin{equation}
\label{eq:Sup_super_aux_stat}
\begin{aligned}
\min\{-L\bar u(x_{\alpha,\eps}) + c(x_{\alpha,\eps})\bar u(x_{\alpha,\eps}) - f(x_{\alpha,\eps}), 
\bar u(x_{\alpha,\eps}) - \varphi(x_{\alpha,\eps})\} \leq\, 0,\\
\min\{-L\bar v(y_{\alpha,\eps}) + c(y_{\alpha,\eps}) \bar v(y_{\alpha,\eps}) - f(y_{\alpha,\eps}), 
\bar v(y_{\alpha,\eps}) - \varphi(y_{\alpha,\eps})\} \geq\, 0.
\end{aligned}
\end{equation}
For $\eps>0$ fixed, if there is a sequence $\{\alpha_k\}_{k\in\NN}$ converging to infinity such that $\bar u(x_{\alpha_k,\eps}) \leq \varphi(x_{\alpha_k,\eps})$, then it follows by \eqref{eq:Conv_x_y_stat} that $\bar u(x_{\eps}) \leq \varphi(x_{\eps})$. The second inequality in \eqref{eq:Sup_super_aux_stat} shows that $\bar v(y_{\alpha,\eps}) \geq \varphi(y_{\alpha,\eps})$, for all $\alpha>0$, and so we have that
\begin{equation}
\label{eq:Ineq_easy_case}
v(x_{\eps}) \geq \varphi(x_{\eps}) \geq u(x_{\eps}).
\end{equation}
If we cannot find a sequence $\{\alpha_k\}_{k\in\NN}$ satisfying the preceding property, then the inequalities in \eqref{eq:Sup_super_aux_stat} hold for the first terms on the left-hand side, which by subtraction imply that 
\begin{align}
c(x_{\alpha,\eps}) u(x_{\alpha,\eps}) - c(y_{\alpha,\eps}) v(y_{\alpha,\eps}) 
&= c(x_{\alpha,\eps}) \bar u(x_{\alpha,\eps}) - c(y_{\alpha,\eps}) \bar v(y_{\alpha,\eps})\notag\\
&\label{eq:Diff_op_sub_super_stat}
\leq L\bar u(x_{\alpha,\eps}) - L\bar v(y_{\alpha,\eps}) + f(x_{\alpha,\eps}) -  f(y_{\alpha,\eps}).
\end{align}
We write the last term in the preceding inequality as a sum $I_1+I_2+I_3$, where each term is defined by
\begin{align*}
I_1 &:=b(x_{\alpha,\eps})\dotprod\left(\alpha z_{\alpha,\eps}+2\eps x_{\alpha,\eps}\right)
-b(y_{\alpha,\eps})\dotprod\left(\alpha z_{\alpha,\eps}-2\eps y_{\alpha,\eps}\right)
+ f(x_{\alpha,\eps}) -  f(y_{\alpha,\eps}),\\
I_2 &:= \int_{B_r\setminus\{O\}} 
\left(\bar u(x_{\alpha,\eps}+F(x_{\alpha,\eps},y)) - \bar u(x_{\alpha,\eps})-\nabla\bar u(x_{\alpha,\eps})\dotprod F(x_{\alpha,\eps},y)\right) \,\nu(dy) \\
&\quad - \int_{B_r\setminus\{O\}}  \left(\bar v(y_{\alpha,\eps}+F(y_{\alpha,\eps},y)) - \bar v(y_{\alpha,\eps})-\nabla\bar v(y_{\alpha,\eps})\dotprod F(y_{\alpha,\eps},y)\right) \,\nu(dy),\\
I_3 &:=\int_{B_r^c} 
\left(\bar u(x_{\alpha,\eps}+F(x_{\alpha,\eps},y)) - \bar u(x_{\alpha,\eps})-\nabla\bar u(x_{\alpha,\eps})\dotprod F(x_{\alpha,\eps},y)\right) \,\nu(dy) \\
&\quad - \int_{B_r^c}  \left(\bar v(y_{\alpha,\eps}+F(y_{\alpha,\eps},y)) - \bar v(y_{\alpha,\eps})-\nabla\bar v(y_{\alpha,\eps})\dotprod F(y_{\alpha,\eps},y)\right) \,\nu(dy),
\end{align*}
where we denote $z_{\alpha,\eps}:=x_{\alpha,\eps} - y_{\alpha,\eps}$. 
Rearranging terms and using the definitions of the functions $\bar u$ and $\bar v$, we obtain by direct computations:
\begin{align*}
I_1 &:=\alpha\left(b(x_{\alpha,\eps})- b(y_{\alpha,\eps})\right)\dotprod  z_{\alpha,\eps}
+2\eps \left(b(x_{\alpha,\eps})\dotprod x_{\alpha,\eps} +b(y_{\alpha,\eps})\dotprod  y_{\alpha,\eps}\right)
+ f(x_{\alpha,\eps}) -  f(y_{\alpha,\eps}),\\
I_2 &:= \left(\frac{\alpha}{2}+\eps\right)\int_{B_r\setminus\{O\}} \left(|F(x_{\alpha,\eps},y)|^2 + |F(y_{\alpha,\eps},y)|^2\right) \,\nu(dy),\\
I_3 &:= \int_{B_r^c} \left(u(x_{\alpha,\eps}+F(x_{\alpha,\eps},y)) - u(x_{\alpha,\eps}) - v(y_{\alpha,\eps}+F(y_{\alpha,\eps},y)) + v(y_{\alpha,\eps})\right) \,\nu(dy)\\
&\quad
-\alpha \int_{B_r^c}z_{\alpha,\eps}\dotprod \left(F(x_{\alpha,\eps},y)- F(y_{\alpha,\eps},y)\right)\,d\nu(dy)\\
&\quad
-2\eps \int_{B_r^c} (x_{\alpha,\eps}\dotprod F(x_{\alpha,\eps},y) + y_{\alpha,\eps}\dotprod F(y_{\alpha,\eps},y))\dotprod  \,\nu(dy),
\end{align*}
Using definition \eqref{eq:M_a_e_stat} of $M_{\alpha,\eps}$, it follows that
\begin{align*}
&u(x_{\alpha,\eps}+F(x_{\alpha,\eps},y)) - u(x_{\alpha,\eps}) - v(y_{\alpha,\eps}+F(y_{\alpha,\eps},y)) + v(y_{\alpha,\eps})\\
&\qquad\leq 
\alpha z_{\alpha,\eps} \dotprod\left(F(x_{\alpha,\eps},y) - F(y_{\alpha,\eps},y)\right)\\
&\qquad\quad
+\frac{\alpha}{2}\left|F(x_{\alpha,\eps},y) - F(y_{\alpha,\eps},y)\right|^2
+2\eps x_{\alpha,\eps} \dotprod F(x_{\alpha,\eps},y) +2\eps y_{\alpha,\eps} \dotprod F(y_{\alpha,\eps},y)\\
&\qquad\quad
+\eps \left(|F(x_{\alpha,\eps},y)|^2 + |F(y_{\alpha,\eps},y)|^2\right),
\end{align*}
which implies that
\begin{align*}
I_3 &\leq \frac{\alpha}{2} \int_{B_r^c} |F(x_{\alpha,\eps},y) - F(y_{\alpha,\eps},y)|^2\,\nu(dy) 
+ \eps\int_{B_r^c} |\left(F(x_{\alpha,\eps},y)|^2 + |F(y_{\alpha,\eps},y)|^2\right)\,\nu(dy) \\
&
\leq K\left(\frac{\alpha}{2}|x_{\alpha,\eps}-y_{\alpha,\eps}|^2 + 2\eps\right),
\end{align*}
where in the last line we used conditions \eqref{eq:F_Lipschitz_squared} and \eqref{eq:F_Levy_cond}. Property \eqref{eq:F_Levy_cond} implies that $I_2$ converges to zero, as $r\downarrow 0$. Applying the preceding observations to the right-hand side of identity \eqref{eq:Diff_op_sub_super_stat} and letting $r$ tend to zero, we obtain
\begin{align*}
c(x_{\alpha,\eps}) u(x_{\alpha,\eps}) - c(y_{\alpha,\eps}) v(y_{\alpha,\eps}) 
&\leq \alpha\left(b(x_{\alpha,\eps})- b(y_{\alpha,\eps})\right)\dotprod z_{\alpha,\eps}
+2\eps \left(b(x_{\alpha,\eps})\dotprod x_{\alpha,\eps} +b(y_{\alpha,\eps})\dotprod y_{\alpha,\eps}\right)\\
&\quad+ f(x_{\alpha,\eps}) -  f(y_{\alpha,\eps}) + K\left(\frac{\alpha}{2}|z_{\alpha,\eps}|^2 + 2\eps\right).
\end{align*}
Because $b$ is a bounded, Lipschitz continuous function, we can find a positive constant, $C$, such that
\begin{align*}
c(x_{\alpha,\eps}) u(x_{\alpha,\eps}) - c(y_{\alpha,\eps}) v(y_{\alpha,\eps}) 
&\leq C \left(\alpha |z_{\alpha,\eps}|^2 + \eps +\eps(|x_{\alpha,\eps}|+|y_{\alpha,\eps}|)\right)\\
&\quad+|f(x_{\alpha,\eps}) -  f(y_{\alpha,\eps})|.
\end{align*}
Letting now $\alpha$ tend to infinity, and using properties \eqref{eq:Conv_diff_x_y_stat}, \eqref{eq:Conv_x_y_stat}, and the fact that $f\in C(\RR^n)$, it follows that 
\begin{equation}
\label{eq:Ineq_eps}
c(x_{\eps}) \left(u(x_{\eps}) - v(x_{\eps})\right) \leq C\left(\eps +\eps|x_{\eps}|\right),
\end{equation}
and using condition \eqref{eq:c_lower_bound} satisfied by the coefficient $c(x)$, we have
$$
u(x_{\eps}) - v(x_{\eps}) \leq C\left(\eps +\eps|x_{\eps}|\right).
$$
It is clear from identity \eqref{eq:No_a_stat} that $\eps|x_{\eps}|^2$ is bounded in $\eps$, and that
$$
\lim_{\eps\downarrow 0} (u(x_{\eps}) - v(x_{\eps})) = \sup_{x\in\RR^n} \{u(x)-v(x)\}.
$$
Letting now $\eps$ tend to zero in the preceding two properties, we obtain that $u(x)-v(x) \leq 0$, for all $x\in\RR^n$.
Together with inequality \eqref{eq:Ineq_easy_case}, this completes the proof. 
\end{proof}

\begin{rmk}
Examining the proof of Theorem~\ref{thm:Comparison_princ_stat}, we see from the inequality \eqref{eq:Ineq_eps} that 
$$
u(x_{\eps}) - v(x_{\eps}) \leq \frac{C}{c(x_{\eps})}\left(\eps +\eps|x_{\eps}|\right),
$$ 
where the right-hand side converges to zero when we assume that $c(x)$ is a positive function on $\RR^n$ and satisfies property 
\eqref{eq:c_lower_bound_infty}. Thus, as indicated in Remark \ref{eq:c_lower_bound_infty}, we again obtain that $u(x)-v(x) \leq 0$, for all $x\in\RR^n$, and the conclusion of Theorem~\ref{thm:Comparison_princ_stat} holds.
\end{rmk}

\begin{proof}[Proof of Theorem~\ref{thm:Uniqueness_stat}]
It is an obvious consequence of Theorem~\ref{thm:Comparison_princ_stat}.
\end{proof}

\section{Evolution obstacle problem}
\label{sec:Evolution}
In this section, we outline the proofs of Proposition~\ref{prop:Regularity_evol}, and Theorems~\ref{thm:Existence_evol} and \ref{thm:Uniqueness_evol}, and of a Dynamical Programming Principle in Lemma~\ref{lem:DPP_evol} and a comparison principle in Theorem~\ref{thm:Comparison_princ_evol}. Because the proofs are very similar to those for the stationary obstacle problem, we only point out the main changes that need to be done to the proofs in \S~\ref{sec:Stationary}. 

We begin with an auxiliary lemma which we use to prove Proposition~\ref{prop:Regularity_evol}:

\begin{lem}[Continuity properties of $\{X(t)\}_{t\geq 0}$]
\label{lem:Cont_X}
Suppose that Assumption~\ref{assump:Coeff} is satisfied. Then there is a positive constant, $C=C(\|b\|_{C^{0,1}(\RR^n)}, K)$ such that
\begin{align}
\label{eq:Cont_X_1}
\EE\left[\max_{s\in [0,t]} \left|X^{x_1}(s)-X^{x_2}(s)\right|^2\right] &\leq C|x_1-x_2|^2 e^{Ct},
\quad\forall\, x_1,x_2\in\RR^n,\, t\geq 0,\\
\label{eq:Cont_X_2}
\EE\left[\max_{r\in [s,t]} \left|X^x(r)-X^x(s)\right|^2\right] &\leq C|t-s|\vee|t-s|^2,\quad\forall\, x\in\RR^n,\, 0\leq s<t.
\end{align}
\end{lem}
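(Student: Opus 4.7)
The plan is to prove both estimates by writing the SDE \eqref{eq:Process} in integral form, splitting into drift and compensated Poisson martingale parts, and then bounding each by standard tools: Cauchy--Schwarz for the drift, Doob's $L^2$ maximal inequality for the jump martingale, the assumptions in Assumption~\ref{assump:Coeff}, and (for part \eqref{eq:Cont_X_1}) Gronwall's inequality.

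For \eqref{eq:Cont_X_1}, the first step is to subtract the integral forms of $X^{x_1}$ and $X^{x_2}$, apply the elementary inequality $(a+b+c)^2 \leq 3(a^2+b^2+c^2)$ inside the supremum, and take expectations. The drift contribution is estimated via Cauchy--Schwarz by
$$t\,[b]_{C^{0,1}(\RR^n)}^2 \int_0^t \EE\bigl[\max_{r'\leq r}|X^{x_1}(r')-X^{x_2}(r')|^2\bigr]\,dr,$$
and the compensated Poisson integral, being a càdlàg $L^2$ martingale, has its maximum controlled by $4$ times its predictable quadratic variation via Doob's inequality, which by \eqref{eq:F_Lipschitz_squared} is bounded by $4K\int_0^t \EE[|X^{x_1}(r)-X^{x_2}(r)|^2]\,dr$. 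Combining yields a Gronwall-type inequality for $\varphi(t) := \EE[\max_{s\leq t}|X^{x_1}(s)-X^{x_2}(s)|^2]$ of the form $\varphi(t) \leq 3|x_1-x_2|^2 + C\int_0^t \varphi(r)\,dr$, from which Gronwall gives the claim. A standard localization by stopping times $\sigma_n := \inf\{t : |X^{x_1}(t)| + |X^{x_2}(t)| \geq n\}$ is needed first to guarantee that $\varphi$ is finite before Gronwall can be invoked; this is the only nontrivial technicality.

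For \eqref{eq:Cont_X_2}, the same decomposition applied to the increment $X^x(r)-X^x(s)$ gives $(a+b)^2 \leq 2(a^2+b^2)$: the drift part is bounded deterministically by $2\|b\|_{C(\RR^n)}^2 (t-s)^2$ using Cauchy--Schwarz, while Doob's inequality and the bound $|F(x,y)|\leq \rho(y)$ implied by \eqref{eq:F_Levy_cond} (taking $z=y$) yield
$$\EE\Bigl[\max_{r\in[s,t]}\Bigl|\int_s^r\!\!\int_{\RR^n\setminus\{O\}} F(X^x(u-),y)\,\widetilde N(du,dy)\Bigr|^2\Bigr]\leq 4(t-s)\!\int_{\RR^n\setminus\{O\}}\!\rho(y)^2\,\nu(dy)\leq 4K(t-s),$$
and summing these two bounds produces the $|t-s|\vee|t-s|^2$ estimate.

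The only real obstacle is the localization needed for \eqref{eq:Cont_X_1}: one must first run the argument with $X^{x_i}$ replaced by the stopped processes $X^{x_i}(\cdot\wedge\sigma_n)$, verify the a priori bound for these, and then let $n\to\infty$ using monotone convergence. Everything else is routine moment analysis for Poisson stochastic integrals and is completely standard, so I would not spell it out in detail.
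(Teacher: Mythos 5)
Your proposal is correct and takes essentially the same route as the paper's proof: the same drift/compensated-Poisson-martingale decomposition, Doob's $L^2$ maximal inequality combined with the It\^o isometry and conditions \eqref{eq:F_Lipschitz_squared}--\eqref{eq:F_Levy_cond}, and Gronwall for \eqref{eq:Cont_X_1} (your localization step is a standard technicality the paper omits). The only shared looseness is that Cauchy--Schwarz on the drift gives a factor $t\int_0^t$, so a single Gronwall application on the running supremum yields $e^{Ct^2}$ rather than $e^{Ct}$ for large $t$ (fixable by first bounding $\EE[|X^{x_1}(t)-X^{x_2}(t)|^2]$ without the supremum); this is immaterial since the lemma is only used for $t\in[0,T]$, and the paper's own write-up is no more careful on this point.
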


\begin{proof}
To prove inequality \eqref{eq:Cont_X_1}, using the stochastic equation \eqref{eq:Process}, we have that
\begin{equation}
\label{eq:X_difference_x}
X^{x_1}(t)-X^{x_2}(t) = x_1-x_2 + \int_0^t(b(X^{x_1}(s)) - b(X^{x_2}(s)))\,ds + M(t),\quad\forall\,t \geq 0,
\end{equation}
where we denote by $\{M(t)\}_{t\geq 0}$ the square-integrable martingale:
$$
M(t) := \int_0^t\int_{\RR^n\setminus\{O\}}\left(F(X^{x_1}(s-),y) - F(X^{x_2}(s-),y)\right) \widetilde N(ds, dy).
$$
Applying Doob's martingale inequality \cite[Theorem~2.1.5]{Applebaum} and \cite[Lemma~4.2.2]{Applebaum} to $\{M(t)\}_{t\geq 0}$, and using property \eqref{eq:F_Lipschitz_squared}, it follows that
\begin{align*}
\EE\left[\sup_{s\in[0,t]} |M(s)|^2\right] &\leq 4 \EE\left[|M(t)|^2\right]\\
&\leq \EE\left[\int_0^t\int_{\RR^n\setminus\{O\}}\left|F(X^{x_1}(s-),y) - F(X^{x_2}(s-),y)\right|^2\, \nu(dy)\,ds\right]\\
&\leq K\EE\left[\int_0^t |X^{x_1}(s)-X^{x_2}(s)|^2\,ds\right].
\end{align*}
The preceding inequality, identity \eqref{eq:X_difference_x}, and the Lipschitz continuity of the drift coefficient $b(x)$, together with Gronwall's inequality, imply estimate \eqref{eq:Cont_X_1}. To prove inequality \eqref{eq:Cont_X_2}, we again use the stochastic equation \eqref{eq:Process} and we obtain that
\begin{equation}
\label{eq:X_difference_x_aux}
X^x(t)-X^x(s) = \int_s^t b(X^x(r))\,dr + N(t),\quad\forall\, 0\leq s<t,
\end{equation}
where we denote by $\{N(t)\}_{t\geq 0}$ the square-integrable martingale:
$$
N(t) := \int_s^t\int_{\RR^n\setminus\{O\}} F(X^x(r-),y) \widetilde N(dr, dy),\quad\forall\, t>s.
$$
Applying Doob's martingale inequality \cite[Theorem~2.1.5]{Applebaum} and \cite[Lemma~4.2.2]{Applebaum} to $\{N(t)\}_{t\geq 0}$, and using the boundedness of the coefficient $b(x)$, it follows from identity \eqref{eq:X_difference_x_aux} that
\begin{align*}
\EE\left[\sup_{r\in[s,t]} |X^x(r)-X^x(s)|^2\right] 
&\leq C\left(|t-s|^2 + \EE\left[\int_s^t\int_{\RR^n\setminus\{O\}}\left|F(X^x(r-),y) \right|^2\, \nu(dy)\,ds\right]\right)\\
&\leq C\left(|t-s|^2 + |t-s| \int_{\RR^n\setminus\{O\}} |\rho(y)|^2 \,\nu(dy)\right)\quad\hbox{ (by condition \eqref{eq:F_Levy_cond})}\\
&\leq C|t-s|\vee|t-s|^2.
\end{align*}
This concludes the proof of inequality \eqref{eq:Cont_X_2} and of Lemma~\ref{lem:Cont_X}. 
\end{proof}

\begin{proof}[Proof of Proposition~\ref{prop:Regularity_evol}]
We divide the proof into two steps.

\setcounter{step}{0}
\begin{step}[Lipschitz regularity in the spatial variable]
\label{step:Lipschitz_space}
Using the expression of the value function \eqref{eq:Value_function_evol}, we see that 
$|v(t,x_1)-v(t,x_2)| \leq \sup\{|v(t,x_1;\tau)-v(t,x_2;\tau)|:\tau\in\cT_{T-t}\}$, and so our goal is to prove that there is a positive constant $C$ 
such that
\begin{equation}
\label{eq:Lipschitz_space_aux}
|v(t,x_1;\tau)-v(t,x_2;\tau)| \leq C |x_1-x_2|,\quad\forall\, t\in[0,T],\quad\forall\, x_1,x_2\in\RR^n,\quad\forall\, \tau\in\cT_{T-t}.
\end{equation}
Similarly to the proof of Proposition~\ref{prop:Regularity_stat}, using the Lipschitz continuity of $c,f,\varphi$, and $g$, and the boundedness of the zeroth order term $c$ and of the stopping time $\tau \in [0,T]$, we obtain that
$$
|v(t,x_1;\tau)-v(t,x_2;\tau)| \leq C\EE\left[\max_{0\leq s\leq T-t} |X^{x_1}(s)-X^{x_2}(s)|\right].
$$
Applying H\"older's inequality and estimate \eqref{eq:Cont_X_1} to the right-hand side of the preceding inequality, we obtain that \eqref{eq:Lipschitz_space_aux} holds, and so we have that
\begin{equation}
\label{eq:Lipschitz_space}
\sup\{\|v(t,\cdot)\|_{C^{0,1}(\RR^n)}:\, t\in [0,T]\}<\infty.
\end{equation}
\end{step}

\begin{step}[$C^{\frac{1}{2}}$-regularity in the time variable]
\label{step:Holder_time}
We assume without loss of generality that $0\leq t_1<t_2\leq T$, and using the fact that $\cT_{T-t_2} \subset \cT_{T-t_1}$, we have that the following inequalities hold:
\begin{align*}
v(t_2,x) - v(t_1,x) &\leq \sup_{\tau\in\cT_{T-t_2}} \left(v(t_2,x;\tau) - v(t_1,x;\tau')\right),\\
v(t_1,x) - v(t_2,x) &\leq \sup_{\tau\in\cT_{T-t_1}}\left(v(t_1,x;\tau) - v(t_2,x;\tau'')\right),
\end{align*}
where we used the notation 
\begin{equation}
\label{eq:Stopping_time_tau}
\begin{aligned}
\tau':=\tau\mathbf{1}_{\{\tau<T-t_2\}} + (T-t_1)\mathbf{1}_{\{\tau=T-t_2\}}
\quad\hbox{ and }\quad
\tau'':=\tau\wedge (T-t_2)\in\cT_{T-t_2}.
\end{aligned}
\end{equation}
Our goal is to prove that there is a positive constant $C$ such that
\begin{align}
\label{eq:Lipschitz_time_1}
v(t_2,x;\tau) - v(t_1,x;\tau') &\leq C|t_1-t_2|,\quad\forall\, \tau\in\cT_{T-t_2},\\
\label{eq:Lipschitz_time_2}
v(t_1,x;\tau) - v(t_2,x;\tau'')&\leq C|t_1-t_2|^{\frac{1}{2}},\quad\forall\, \tau\in\cT_{T-t_1}.
\end{align}
The preceding four inequalities will imply that 
\begin{equation}
\label{eq:Holder_time}
\sup\{\|v(\cdot,x)\|_{C^{\frac{1}{2}}([0,T])}: x\in\RR^n\}<\infty.
\end{equation}
We split the proofs of inequalities \eqref{eq:Lipschitz_time_1} and \eqref{eq:Lipschitz_time_2} into two cases.
\setcounter{case}{0}
\begin{case}[Proof of inequality \eqref{eq:Lipschitz_time_1}]
\label{case:Lipschitz_time_1}
We denote for simplicity
$$
E(t,s,x):= e^{-\int_0^s c(t+r, X^x(r))\, dr},\quad\forall\, t\in[0,T],\, s\in[0,T-t],\, x\in\RR^n,
$$
and using the choice of the stopping times $\tau'$ in \eqref{eq:Stopping_time_tau}, we obtain the decomposition:
\begin{align*}
&v(t_2,x;\tau) - v(t_1,x;\tau') \\
&\qquad= 
\EE\left[\mathbf{1}_{\{\tau<T-t_2\}} \left(E(t_2,\tau,x) - E(t_1,\tau,x)\right)\varphi(t_2+\tau, X^x(\tau))\right]\\
&\qquad\quad+
\EE\left[\mathbf{1}_{\{\tau<T-t_2\}} E(t_1,\tau,x)\left(\varphi(t_2+\tau, X^x(\tau))- \varphi(t_1+\tau, X^x(\tau))\right)\right]\\
&\qquad\quad+
\EE\left[\mathbf{1}_{\{\tau = T-t_2\}} \left(E(t_2,\tau,x)-E(t_1,\tau,x)\right) g(X^x(\tau))\right]
\quad\hbox{($\tau'=T-t_1$ when $\tau=T-t_2$ by \eqref{eq:Stopping_time_tau})}\\
&\qquad\quad+
\EE\left[\int_0^{\tau} \left(E(t_2,r,x) - E(t_1,r,x)\right) f(t_2+r, X^x(r))\, dr\right]\\
&\qquad\quad + 
\EE\left[\int_0^{\tau} E(t_1,r,x)\left(f(t_2+r,X^x(r) - f(t_1+r, X^x(r))\right)\, dr\right]\\
&\qquad\quad -
\EE\left[\mathbf{1}_{\{\tau=T-t_2\}}\int_{T-t_2}^{T-t_1} E(t_1,r,x) f(t_1+r, X^x(r))\, dr\right].
\end{align*}
Inequality \eqref{eq:Lipschitz_time_1} is an immediate consequence of the preceding expression, the boundedness of the zeroth order term $c$, and of the Lipschitz continuity of the functions $c,f,\varphi$ and $g$.
\end{case}

\begin{case}[Proof of inequality \eqref{eq:Lipschitz_time_2}]
\label{case:Lipschitz_time_2}
Similarly to the proof of Case~\ref{case:Lipschitz_time_1}, by direct computation we can write the difference as a sum of three terms corresponding to the cases $\tau<T-t_2$, $T-t_2\leq\tau<T-t_1$, and $\tau=T-t_1$:
\begin{equation*}
v(t_1,x;\tau) - v(t_2,x;\tau'') = I_1 + I_2 + I_3,
\end{equation*}
where we denote:
\begin{align*}
I_1 &= 
\EE\left[\mathbf{1}_{\{\tau<T-t_2\}}\left( E(t_1,\tau,x)\varphi(t_1+\tau, X^x(\tau))-E(t_2,\tau,x)\varphi(t_2+\tau, X^x(\tau))\right)\right]\\*
&\quad +
\EE\left[\int_0^{\tau\wedge(T-t_2)} \left(E(t_1,r,x)f(t_1+r,X^x(r-) - E(t_2,r,x) f(t_2+r, X^x(r))\right)\, dr\right],\\
I_2 &=
\EE\left[\mathbf{1}_{\{T-t_2\leq \tau <T-t_1\}} \left(E(t_1,\tau,x)\varphi(t_1+\tau,X^x(\tau))-E(t_2,T-t_2,x)g(X^x(T-t_2))\right) \right]\\*
&\quad + 
\EE\left[\mathbf{1}_{\{T-t_2\leq \tau <T-t_1\}}\int_{T-t_2}^{\tau} E(t_1,r,x) f(t_1+r,X^x(r)\, dr\right],\\
I_3 & =
\EE\left[\mathbf{1}_{\{\tau=T-t_1\}} \left(E(t_1,\tau,x) g(X^x(T-t_1))-E(t_2,\tau,x) g(X^x(T-t_2))\right)\right].
\end{align*}
To prove inequality \eqref{eq:Lipschitz_time_2} we further expand each term as follows:
\begin{align*}
I_1 &= 
\EE\left[\mathbf{1}_{\{\tau<T-t_2\}} \left(E(t_1,\tau,x) - E(t_2,\tau,x)\right)\varphi(t_1+\tau, X^x(\tau))\right]\\*
&\quad+
\EE\left[\mathbf{1}_{\{\tau<T-t_2\}} E(t_2,\tau,x)\left(\varphi(t_1+\tau, X^x(\tau))-\varphi(t_2+\tau, X^x(\tau))\right)\right]\\*
&\quad+
\EE\left[\int_0^{\tau\wedge(T-t_2)} \left(E(t_1,r,x) - E(t_2,r,x)\right) f(t_2+r, X^x(r))\, dr\right]\\*
&\quad + 
\EE\left[\int_0^{\tau\wedge(T-t_2)} E(t_2,r,x)\left(f(t_1+r,X^x(r-) - f(t_2+r, X^x(r))\right)\, dr\right],\\
I_2 &=
\EE\left[\mathbf{1}_{\{T-t_2\leq \tau <T-t_1\}} \left(E(t_1,\tau,x)-E(t_2,\tau,x)\right)\varphi(t_1+\tau,X^x(\tau)) \right]\\*
&\quad+
\EE\left[\mathbf{1}_{\{T-t_2\leq \tau <T-t_1\}} \left(E(t_2,\tau,x)-E(t_2,T-t_2,x)\right)\varphi(t_1+\tau, X^x(\tau))\right]\\*
&\quad+
\EE\left[\mathbf{1}_{\{T-t_2\leq \tau <T-t_1\}} E(t_2,T-t_2,x)\left(\varphi(t_1+\tau, X^x(\tau))-\varphi(T,X^x(\tau))\right) \right]\\*
&\quad+
\EE\left[\mathbf{1}_{\{T-t_2\leq \tau <T-t_1\}} E(t_2,T-t_2,x)\left(\varphi(T,X^x(\tau))-g(X^x(\tau))\right) \right]\\*
&\quad+
\EE\left[\mathbf{1}_{\{T-t_2\leq \tau <T-t_1\}} E(t_2,T-t_2,x)\left(g(X^x(\tau))-g(X^x(T-t_2))\right) \right]\\*
&\quad + 
\EE\left[\mathbf{1}_{\{T-t_2\leq \tau <T-t_1\}}\int_{T-t_2}^{\tau} E(t_1,r,x) f(t_1+r,X^x(r)\, dr\right],\\
I_3 
& =
\EE\left[\mathbf{1}_{\{\tau=T-t_1\}} \left(E(t_1,\tau,x)-E(t_2,\tau,x)\right) g(X^x(T-t_1))\right]\\*
&\quad +
\EE\left[\mathbf{1}_{\{\tau=T-t_1\}} \left(E(t_2,\tau,x)-E(t_2,T-t_2,x)\right) g(X^x(T-t_1))\right]\\*
&\quad +
\EE\left[\mathbf{1}_{\{\tau=T-t_1\}} E(t_2,T-t_2,x)\left(g(X^x(T-t_1)) - g(X^x(T-t_2)) \right)\right].
\end{align*}
The compatibility condition \eqref{eq:Compatibility_evol} gives us that the fourth term in the expression of $I_2$ is non-positive.
Using the fact that $c$ is a bounded function and $g$ is Lipschitz continuous, we bound the fifth term in the expression of $I_2$ and the last term in the expression of $I_3$ by
\begin{align*}
&\left|\EE\left[\mathbf{1}_{\{T-t_2\leq \tau <T-t_1\}} E(t_2,T-t_2,x)\left(g(X^x(\tau))-g(X^x(T-t_2))\right) \right]\right|\\
&\quad + \left| \EE\left[\mathbf{1}_{\{\tau=T-t_1\}} E(t_2,T-t_2,x)\left(g(X^x(T-t_1)) - g(X^x(T-t_2)) \right)\right]\right|\\
&\qquad\leq C\EE\left[\sup_{T-t_2\leq s <T-t_1} \left|X^x(s)-X^x(T-t_2)\right| \right],
\end{align*}
for a positive constant $C$. Using again the fact that the zeroth order term $c$ is bounded and $c,f,\varphi$ and $g$ are Lipschitz functions, the remaining terms in the expression of the difference $v(t_1,x;\tau) - v(t_2,x;\tau'')$ can be bounded by $C|t_1-t_2|$, for a positive constant $C$. Hence, it follows that
\begin{align*}
v(t_1,x;\tau) - v(t_2,x;\tau'') 
\leq C|t_1-t_2| + C\EE\left[\sup_{T-t_2\leq s <T-t_1} \left|X^x(s)-X^x(T-t_2)\right| \right].
\end{align*}
Applying H\"older's inequality and estimate \eqref{eq:Cont_X_2} to the second term on the right-hand side, it follows that property \eqref{eq:Lipschitz_time_2} holds. 
\end{case}
The proofs of inequalities \eqref{eq:Lipschitz_time_1} and \eqref{eq:Lipschitz_time_2} imply estimate \eqref{eq:Holder_time}.
\end{step}
Properties \eqref{eq:Lipschitz_space} and \eqref{eq:Holder_time} complete the proof.
\end{proof}

We have the following analogue in the evolution case of the Dynamic Programming Principle in the stationary case proved in Lemma~\ref{lem:DPP_stat}:

\begin{lem}[Dynamic Programming Principle]
\label{lem:DPP_evol}
Suppose that the hypotheses of Proposition~\ref{prop:Regularity_evol} hold. Then, the value function $v(t,x)$ defined in \eqref{eq:Value_function_evol} satisfies:
\begin{equation}
\label{eq:DPP_evol}
v(t,x) = \sup\{v(t,x;r,\tau):\,\tau \leq \tau_r\wedge (T-t)\}, \quad\forall\, (t,x)\in[0,T)\times\RR^n,
\end{equation}
where we denote
\begin{equation}
\label{eq:DPP_evol_aux}
\begin{aligned}
v(t,x;r,\tau) 
&:=\EE\left[
e^{-\int_0^{\tau}c(t+s,X^x(s))\,ds} \varphi(t+\tau,X^x(\tau))\mathbf{1}_{\{\tau<\tau_r\wedge (T-t)\}}\right]\\
&\quad+\EE\left[ e^{-\int_0^{\tau}c(t+s,X^x(s))\,ds} v(t+\tau, X^x(\tau))\mathbf{1}_{\{\tau = \tau_r,\tau<T-t\}}\right]\\
&\quad+\EE\left[ e^{-\int_0^{\tau}c(t+s,X^x(s))\,ds} g(X^x(\tau))\mathbf{1}_{\{\tau < \tau_r,\tau=T-t\}}\right]\\
&\quad + \EE\left[\int_0^{\tau} e^{-\int_0^{\rho} c(t+s,X^x(s))\, ds} f(t+\rho,X^x(\rho))\, d\rho\right].
\end{aligned}
\end{equation}
\end{lem}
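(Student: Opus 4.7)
The proof will follow the two-step structure used in Lemma \ref{lem:DPP_stat}, with adjustments for the time variable and the fact that stopping times are now bounded by $T-t$. Throughout, denote by $w(t,x)$ the right-hand side of \eqref{eq:DPP_evol}.

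For the inequality $v(t,x) \leq w(t,x)$, I would fix $\tau \in \cT_{T-t}$ and condition the expectation \eqref{eq:Value_function_evol_aux} on $\cF_{\tau_r}$, splitting on $\{\tau < \tau_r\}$ and $\{\tau \geq \tau_r\}$. On the latter event, which forces $\tau_r \leq T-t$, the random time $\theta := (\tau-\tau_r)\vee 0$ is a $\{\cF_{\tau_r+s}\}_{s\geq 0}$-stopping time bounded by $T-t-\tau_r$, by the same argument as in Claim \ref{claim:theta}. Using the regular conditional probability distribution $\PP_{\tau_r}$ and the strong Markov property of $\{X^x(t)\}_{t\geq 0}$, the conditional expectation from $\tau_r$ to $\tau$ on this event reduces to $v(t+\tau_r, X^x(\tau_r);\theta)$, which is bounded above by $v(t+\tau_r, X^x(\tau_r))$ by the very definition of the value function in \eqref{eq:Value_function_evol}. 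The three terms corresponding to $\{\tau < \tau_r\wedge(T-t)\}$ with obstacle $\varphi$, $\{\tau = T-t < \tau_r\}$ with final condition $g$, and the running integral up to $\tau\wedge\tau_r$, then match the first, third, and fourth summands in \eqref{eq:DPP_evol_aux}. Taking the supremum over $\tau$ yields the inequality.

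For the reverse inequality $w(t,x)\leq v(t,x)$, I would fix $\eps>0$, choose a stopping time $\tau\leq\tau_r\wedge(T-t)$ that is $\eps$-optimal for $w(t,x)$, pick a countable Borel partition $\{A_k\}_{k\in\NN}$ of $\RR^n$ with each $A_k\subseteq B_\eps(x_k)$ for some $x_k$, and for each $k$ select $\theta_k\in\cT_{T-t-\tau_r}$ that is $\eps$-optimal for $v(t+\tau_r, x_k)$. Here the key simplification relative to the stationary case is that Proposition \ref{prop:Regularity_evol} provides Lipschitz continuity of $v(s,\cdot)$ uniformly in $s\in[0,T]$, so that $|v(t+\tau_r, X^x(\tau_r)) - v(t+\tau_r, x_k)| \leq C\eps$ on $\{X^x(\tau_r)\in A_k\}$, replacing the $\eps^\alpha$-bound used in Step 2 of Lemma \ref{lem:DPP_stat}. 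The concatenated stopping time
\[
\bar\tau := \tau_r + \sum_{k\in\NN}\theta_k(S_{\tau_r})\mathbf{1}_{\{\tau=\tau_r\}}\mathbf{1}_{\{X^x(\tau_r)\in A_k\}}
\]
is then verified to be a $\{\cF_s\}_{s\geq 0}$-stopping time by the same measurability argument as in Claim \ref{claim:bar_tau}, and automatically lies in $\cT_{T-t}$ because $\theta_k\leq T-t-\tau_r$ on $\{\tau=\tau_r\}$. Applying the strong Markov property to identify the expectation on $\{\tau=\tau_r\}$ with $v(t+\tau_r, X^x(\tau_r);\theta_k)$ and assembling the three pieces of $v(t,x;\bar\tau)$, I obtain $w(t,x)<v(t,x)+C\eps$, and then let $\eps\downarrow 0$.

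The only genuinely delicate point, as in the stationary case, is the verification that $\bar\tau$ is a stopping time with the correct bound; this goes through verbatim from Claim \ref{claim:bar_tau} once the identity $\bar\tau\leq T-t$ is observed from the choice $\theta_k\in\cT_{T-t-\tau_r}$. The additional bookkeeping required to keep track of the indicator functions separating $\{\tau<\tau_r\wedge(T-t)\}$, $\{\tau=\tau_r<T-t\}$, and $\{\tau=T-t<\tau_r\}$ in both steps is more intricate than in the stationary setting but introduces no new conceptual difficulties.
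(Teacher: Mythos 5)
Your overall architecture (two inequalities, conditioning at $\tau_r$, concatenated stopping time $\bar\tau$) is the same as the paper's, and your first step is fine: since every admissible $\tau$ is bounded by $T-t$, conditioning on $\cF_{\tau_r}$ and working on the event $\{\tau\geq\tau_r\}$ is equivalent to the paper's conditioning on $\cF_{\tau_r\wedge(T-t)}$, and the identification of the conditional expectation with $v(t+\tau_r,X^x(\tau_r);\theta)\leq v(t+\tau_r,X^x(\tau_r))$ goes through as you describe.

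The gap is in your second step. You partition only the spatial variable and then propose to ``select $\theta_k\in\cT_{T-t-\tau_r}$ that is $\eps$-optimal for $v(t+\tau_r,x_k)$''. But $\tau_r$ is random: both the class $\cT_{T-t-\tau_r}$ and the quantity $v(t+\tau_r,x_k)$ you are approximating depend on $\omega$, so there is no single, deterministically chosen stopping time $\theta_k$ per cell that is simultaneously $\eps$-optimal for $v(s,x_k)$ for all the values $s=t+\tau_r(\omega)$ occurring on $\{X^x(\tau_r)\in A_k\}$. The construction of $\bar\tau$ and the strong Markov identification of the conditional expectation with $v(\,\cdot\,;\theta_k)$ require each $\theta_k$ to be a fixed stopping time applied after the shift $S_{\tau_r}$. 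The fix --- and it is precisely the modification the paper's proof singles out --- is to partition the space-time set $[0,T]\times\RR^n$ into cells $A_k\subseteq B_\eps(s_k,x_k)$ indexed by the value of the pair $(t+\tau_r,X^x(\tau_r))$, choose $\theta_k$ near-optimal for $v(s_k,x_k)$, and control the replacement errors $|v(t+\tau_r,X^x(\tau_r))-v(s_k,x_k)|$ and $|v(s_k,x_k;\theta_k)-v(t+\tau_r,X^x(\tau_r);\theta_k)|$ by $C(\eps^{1/2}+\eps)$ using the full $C^{\frac{1}{2}}_tC^{0,1}_x$ regularity of Proposition~\ref{prop:Regularity_evol}; your argument invokes only the spatial Lipschitz bound, which no longer suffices once the restart time must also be discretized. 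A smaller related point: the continuation term in $\bar\tau$ should carry the indicator $\mathbf{1}_{\{\tau=\tau_r,\,\tau<T-t\}}$ rather than $\mathbf{1}_{\{\tau=\tau_r\}}$, so that on $\{\tau=T-t<\tau_r\}$ you stop and collect $g$ instead of concatenating.
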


\begin{proof}
To prove Lemma~\ref{lem:DPP_evol} we can employ the argument used to establish Lemma~\ref{lem:DPP_stat} with the following changes. We choose the stopping times $\tau$ in $\cT_{T-t}$ instead of $\cT$, and we replace the use of the function $v(x;r,\tau)$ defined in \eqref{eq:DPP_stat_aux} by that of $v(t,x;r,\tau)$ in \eqref{eq:DPP_evol_aux}. In Step~\ref{step:Ineq_v_less_w} of the proof of Lemma~\ref{lem:DPP_stat} we condition on the $\sigma$-algebra $\cF_{\tau_r\wedge(T-t)}$ instead of $\cF_{\tau_r}$, and in Step~\ref{step:Ineq_w_less_v} of Lemma~\ref{lem:DPP_stat}, we choose the family of sets $\{A_k\}_{k\in\NN}$ such that it partitions $[0,T]\times\RR^n$ instead of $\RR^n$, and we replace the application of Proposition~\ref{prop:Regularity_stat} by that of Proposition~\ref{prop:Regularity_evol}. We omit the remaining details of the proof for brevity.
\end{proof}

We can now use Lemma~\ref{lem:DPP_evol} to prove the existence of viscosity solutions to the evolution obstacle problem \eqref{eq:Evolution_obs_problem}.

\begin{proof}[Proof of Theorem~\ref{thm:Existence_evol}]
The proof is very similar to that of Theorem~\ref{thm:Existence_stat} with the observation that we replace the application of Lemma~\ref{lem:DPP_stat} with that of Lemma~\ref{lem:DPP_evol}. We omit the detailed proof.
\end{proof}

\begin{thm}[Comparison principle]
\label{thm:Comparison_princ_evol}
Suppose that Assumption~\ref{assump:Coeff} is satisfied, $g$ belongs to $C(\RR^n)$, $c,f,\varphi$ are in $C([0,T]\times\RR^n)$, and conditions \eqref{eq:Compatibility_evol} and \eqref{eq:Interior_point} hold. If $u$ and $v$ are a viscosity subsolution and supersolution to the evolution obstacle problem \eqref{eq:Evolution_obs_problem}, respectively, then $u\leq v$.
\end{thm}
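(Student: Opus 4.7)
The plan is to adapt the proof of Theorem~\ref{thm:Comparison_princ_stat} to the parabolic setting by doubling also the time variable, while first absorbing the absence of the positivity condition \eqref{eq:c_lower_bound} on $c$ via an exponential change of variable. Namely, I would substitute $\tilde u(t,x) := e^{\lambda(T - t)} u(t,x)$ and $\tilde v(t,x) := e^{\lambda(T - t)} v(t,x)$ with $\lambda > 0$: a direct computation shows that $\tilde u$ and $\tilde v$ are respectively a viscosity sub- and supersolution of an evolution obstacle problem of the form \eqref{eq:Evolution_obs_problem} with $c$ replaced by $c + \lambda$, $f$ by $e^{\lambda(T - t)} f$, $\varphi$ by $e^{\lambda(T - t)}\varphi$, and $g$ unchanged; the compatibility \eqref{eq:Compatibility_evol} is preserved, and choosing $\lambda$ large enough makes the new zeroth-order term bounded below by a positive constant $c_0$. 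Thus I may assume without loss of generality that $c \geq c_0 > 0$.

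For $\alpha, \eps > 0$, set
\begin{equation*}
M_{\alpha,\eps} := \sup_{(t,x),(s,y) \in [0,T] \times \RR^n} \Bigl\{ u(t,x) - v(s,y) - \frac{\alpha}{2}\bigl(|x - y|^2 + (t - s)^2\bigr) - \eps(|x|^2 + |y|^2) \Bigr\},
\end{equation*}
and let $(t_{\alpha,\eps}, s_{\alpha,\eps}, x_{\alpha,\eps}, y_{\alpha,\eps})$ denote a maximizer. By \cite[Lemma~3.1]{Crandall_Ishii_Lions_1992} one has $\alpha(|x_{\alpha,\eps} - y_{\alpha,\eps}|^2 + (t_{\alpha,\eps} - s_{\alpha,\eps})^2) \to 0$, and along a subsequence $(t_{\alpha,\eps}, s_{\alpha,\eps}, x_{\alpha,\eps}, y_{\alpha,\eps}) \to (t_\eps, t_\eps, x_\eps, x_\eps)$ as $\alpha \to \infty$. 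If $t_{\alpha,\eps} = T$ (or $s_{\alpha,\eps} = T$) for arbitrarily large $\alpha$, then the penalty forces the other time to $T$ as well, and \eqref{eq:Final_cond_evol} together with \eqref{eq:Compatibility_evol} yields $u(T, x_\eps) \leq g(x_\eps) \leq v(T, x_\eps)$, so that $M_{\alpha,\eps} \to $ a non-positive quantity by the continuity of $g$ and $v$. Otherwise, for $\alpha$ large $(t_{\alpha,\eps}, s_{\alpha,\eps}) \in [0, T) \times [0, T)$ and the doubled test functions have $\partial_t$ at $(t_{\alpha,\eps}, x_{\alpha,\eps})$ equal to $\alpha(t_{\alpha,\eps} - s_{\alpha,\eps})$ and $\partial_s$ at $(s_{\alpha,\eps}, y_{\alpha,\eps})$ also equal to $\alpha(t_{\alpha,\eps} - s_{\alpha,\eps})$, so the time-derivative contributions cancel when the subsolution and supersolution inequalities from Definition~\ref{defn:Viscosity_sol_evol} are subtracted.

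The remainder of the argument is then a direct transcription of the stationary proof: I would introduce the auxiliary functions $\bar u$ and $\bar v$ that coincide with $u$ and $v$ outside balls of radius $r$ around $x_{\alpha,\eps}$ and $y_{\alpha,\eps}$ (with $r$ chosen via condition \eqref{eq:Interior_point}) and with the quadratic test function inside. Applying Definition~\ref{defn:Viscosity_sol_evol}, either one of the obstacle alternatives is active, which yields $u(t_\eps, x_\eps) \leq \varphi(t_\eps, x_\eps) \leq v(t_\eps, x_\eps)$ in the limit, or the two differential inequalities hold; in the latter case, subtracting them and using the time-derivative cancellation reproduces exactly the analogue of \eqref{eq:Diff_op_sub_super_stat}. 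Splitting the L\'evy term $L\bar u(t_{\alpha,\eps},x_{\alpha,\eps}) - L\bar v(s_{\alpha,\eps},y_{\alpha,\eps})$ into the $B_r \setminus \{O\}$ and $B_r^c$ pieces and estimating them via \eqref{eq:F_Lipschitz_squared} and \eqref{eq:F_Levy_cond} exactly as in the stationary proof produces, after $r \downarrow 0$ and $\alpha \to \infty$, the bound $c(t_\eps, x_\eps)\bigl( u(t_\eps, x_\eps) - v(t_\eps, x_\eps) \bigr) \leq C(\eps + \eps|x_\eps|)$. Letting $\eps \to 0$ and invoking the reduced condition $c \geq c_0 > 0$ then yields $u \leq v$ on $[0, T] \times \RR^n$.

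The main obstacle is the terminal-time bookkeeping of the second paragraph: one must rule out, or handle directly through the final condition and the continuity of $g$ and $v$, the possibility that the doubled supremum is attained at $t_{\alpha,\eps} = T$ or $s_{\alpha,\eps} = T$, so that the interior analysis can proceed without needing an additional penalty of the form $\mu/(T-t)$ (which would introduce nontrivial remainders in the time-derivative slot). Once this is verified, the only genuinely parabolic step in the argument is the cancellation of the two equal time-derivative contributions, and every other ingredient transfers verbatim from Theorem~\ref{thm:Comparison_princ_stat}.
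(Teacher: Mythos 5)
Your proposal is correct and follows essentially the same route as the paper, which itself only sketches this proof: double both the space and time variables in $M_{\alpha,\eps}$ exactly as in \eqref{eq:M_a_e_evol}, and first perform the exponential rescaling $e^{\lambda(T-t)}$ to reduce to a strictly positive zeroth-order term before transcribing the stationary argument of Theorem~\ref{thm:Comparison_princ_stat}. The terminal-time bookkeeping and the cancellation of the two time-derivative contributions, which the paper omits ``for brevity,'' are handled correctly in your write-up.
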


\begin{proof}
The proof is identical to that of Theorem~\ref{thm:Comparison_princ_stat} except for the following modifications. We replace the definition of $M_{\alpha,\eps}$ in \eqref{eq:M_a_e_stat} by
\begin{equation}
\label{eq:M_a_e_evol}
M_{\alpha,\eps} := \sup_{(t,x),(s,y)\in[0,T]\times\RR^n} \left\{u(t,x)-v(s,y) - \frac{\alpha}{2}\left(|x-y|^2+|t-s|^2\right)-\eps(|x|^2+|y|^2)\right\}.
\end{equation}
In the proof of Theorem~\ref{thm:Comparison_princ_stat} we used the fact that the zeroth order coefficient $c(x)$ is positive, which is not necessarily true under the hypotheses of Theorem~\ref{thm:Comparison_princ_evol}. We notice that because $c(x)$ is bounded, there is a positive constant $\lambda$ such that $\lambda+c(x)>0$. From Definition~\ref{defn:Viscosity_sol_evol} it is clear that if $v$ is a viscosity sub- or super-solution to the evolution obstacle problem \eqref{eq:Evolution_obs_problem}, then $e^{\lambda(T-t)}v$ is a viscosity sub- or supersolution to equation \eqref{eq:Evolution_obs_problem}, where we replace $c(x)$ by $c(x)+\lambda>0$, $f(t,x)$ by $e^{\lambda(T-t)}f(t,x)$, and $\varphi(t,x)$ by $e^{\lambda(T-t)}\varphi(t,x)$.
We omit the remainder of the proof for brevity.
\end{proof}

\begin{proof}[Proof of Theorem~\ref{thm:Uniqueness_evol}]
The conclusion of Theorem~\ref{thm:Uniqueness_evol} is an immediate consequence of Theorem~\ref{thm:Comparison_princ_evol}.
\end{proof}

%
%

\bibliography{mfpde}
\bibliographystyle{amsplain}

\end{document}